\documentclass[twoside,leqno]{article}

\usepackage[letterpaper]{geometry}

\usepackage{siamproceedings}

\usepackage[T1]{fontenc}
\usepackage{amsfonts}
\usepackage{graphicx}
\usepackage{epstopdf}
\usepackage{enumitem}
\usepackage{algorithmic}
\ifpdf
  \DeclareGraphicsExtensions{.eps,.pdf,.png,.jpg}
\else
  \DeclareGraphicsExtensions{.eps}
\fi


\newsiamthm{thm}{Theorem}
\newsiamthm{conj}{Conjecture}
\newsiamthm{lem}{Lemma}
\newsiamthm{problem}{Problem}
\newsiamthm{postulate}{Postulate}

\newsiamremark{remark}{Remark}
\newsiamremark{hypothesis}{Hypothesis}
\crefname{hypothesis}{Hypothesis}{Hypotheses}
\newsiamthm{claim}{Claim}



\usepackage{amsopn}

\begin{document}

\newcommand\relatedversion{}

\title{\Large Refined Absorption: A New Proof of the Existence Conjecture and its Applications to Extremal and Probabilistic Design Theory\relatedversion}
    \author{Luke Postle\thanks{University of Waterloo (\email{lpostle@math.uwaterloo.ca}).}
}

\date{}

\maketitle


\fancyfoot[R]{\scriptsize{Copyright \textcopyright\ 20XX by SIAM\\
Unauthorized reproduction of this article is prohibited}}





\begin{abstract} 
We discuss the recently developed method of refined absorption and how it is used to provide a new proof of the Existence Conjecture for combinatorial designs. This method can also be applied to resolve open problems in extremal and probabilistic design theory while providing a unified framework for these problems. Crucially, the main absorption theorem can be used as a ``black-box'' in these applications obviating the need to reprove the absorption step for each different setup.
\end{abstract}

\section{Introduction.}

\subsection{Design Theory.}

The study of combinatorial designs has a rich history spanning nearly two centuries. One of the most classical theorems in all of design theory is a result of Kirkman~\cite{K47} which classifies for which $n$ there exists a set of triples of an $n$-set $X$ such that every pair in $X$ is in exactly one triple; such a set is called a \emph{Steiner Triple System}. From the graph-theoretic perspective, this is equivalent to a decomposition of the edges of the complete graph $K_n$ into edge-disjoint triangles. Kirkman proved that the necessary divisibility conditions for this, namely that $3~|~\binom{n}{2}$ and $2~|~(n-1)$ which equates to $n\equiv 1,3 \mod 6$, are also sufficient. 

Arguably the most studied object in design theory is a natural generalization of this object as follows. Given integers $n > q > r \geq 1$, an $(n,q,r)$\textit{-Steiner System} is a set $S$ of $q$-subsets of an $n$-set $X$ such that each $r$-subset of $X$ is contained in exactly one element of $S$. From the hypergraph-theoretic perspective, an $(n,q,r)$-Steiner system is equivalent to a decomposition of the edges of $K_n^r$ (the complete $r$-uniform hypergraph on $n$ vertices) into edge-disjoint copies of $K_q^r$ (referred to as cliques). More generally, a \emph{design} with parameters $(n,q,r,\lambda)$ is a set $S$ of $q$-subsets of an $n$-set $X$ such that every $r$-subset of $X$ belongs to exactly $\lambda$ elements of $S$. From the hypergraph-theoretic perspective, an $(n,q,r,\lambda)$-design is equivalent to a decomposition of the multigraph $\lambda*K_q^r$ (meaning each $r$-set has $\lambda$ parallel edges) into a set of edge-disjoint cliques.  Once again, there are necessary divisibility conditions for the existence of an $(n, q, r, \lambda)$-design: for each $0 \leq i \leq r-1,$ we require $\binom{q-i}{r-i}~|~~\lambda\cdot \binom{n-i}{r-i}$.
Whether these divisibility conditions suffice for all large enough $n$ became a notorious folklore conjecture from the 1800s called the Existence of Combinatorial Designs Conjecture.

\begin{conj}[Existence Conjecture]\label{conj:Existence}
Let $q > r \ge 2$ and $\lambda \geq 1$ be integers. If $n$ is sufficiently large and $\binom{q-i}{r-i}~|~\lambda \cdot \binom{n-i}{r-i}$ for all $0\le i \le r-1$, then there exists a design with parameters $(n,q,r,\lambda)$.
\end{conj}

\subsection{History of the Existence Conjecture.}

In 1847, Kirkman~\cite{K47} proved this when $q=3$, $r=2$ and $\lambda=1$. In the 1970s Wilson \cite{WI, WII, WIII} revolutionized design theory when he proved the `graph' case of the Existence Conjecture, namely for all values of $q$ when $r = 2$. In 1963, Erd\H{o}s and Hanani~\cite{EH63} conjectured an {\em approximate} version of the Existence Conjecture, positing that it is possible to find a packing of edge-disjoint copies of $K_q^r$ covering $(1-o(1))$ proportion of the edges of $K_n^r$. This problem was solved in 1985 by R\"{o}dl~\cite{R85} using the celebrated ``nibble'' method. Only in the last decade was the Existence Conjecture fully resolved.

\begin{thm}[Keevash~\cite{K14}]\label{thm:Existence}
Conjecture~\ref{conj:Existence} is true.   
\end{thm}

Namely in 2014, Keevash~\cite{K14} proved the Existence Conjecture using \emph{randomized algebraic constructions}. Thereafter in 2016, Glock, K\"{u}hn, Lo, and Osthus~\cite{GKLO16} gave a purely combinatorial proof of the Existence Conjecture via \emph{iterative absorption}. Both approaches have different benefits and each led to subsequent work over the years using these approaches. In 2024, Delcourt and Postle~\cite{DPI} developed the methodology of {\em refined absorption}, giving a one-step, combinatorial proof of the Existence Conjecture. Later in 2024, Keevash~\cite{K24} provided a more concise proof that also uses the new refined absorption framework. The main goal of this article is to explain this new framework and its applications to problems in extremal and probabilistic design theory.

\subsection{Hypergraph and Graph Decompositions.} While at first glance these design theory questions seem set-theoretic, it is useful to recast them from a graph-theoretic perspective. Given hypergraphs $F$ and $G$, an \emph{$F$-decomposition} of $G$ is a partition of the edges of $G$ into copies of $F$ while an \emph{$F$-packing} is a set of pairwise edge-disjoint copies of $F$. For brevity, we write \emph{$r$-graph} for $r$-uniform hypergraph. As noted, we let $K_n^r$ denote the complete $r$-graph on $n$ vertices. As mentioned, an $(n,q,r)$-Steiner system is equivalent to a $K_q^r$-decomposition of $K_n^r$. There are obvious necessary divisibility conditions for an $r$-graph $G$ to admit a $K_q^r$-decomposition. To that end, an $r$-graph $G$ is \emph{$K_q^r$-divisible} if for all $i\in \{0,1,\ldots, r-1\}$, we have that $\binom{q-i}{r-i}~|~|\{e\in G: S\subseteq e\}|$ for all subsets $S$ of $V(G)$ of size $i$.

\subsection{Nibble and Absorbers.}
There are two crucial concepts used in all the known proofs of the Existence Conjecture, \emph{nibble} and \emph{absorbers}; nibble allows the construction of an almost full Steiner system while absorbers allows us to complete such partial constructions to a full Steiner system. Let us discuss these now in more detail.

The idea of nibble is that instead of finding a desired object by a simple application of the probabilistic method, we instead build the object randomly a little bit at a time while carefully controlling the properties of future sampling. (The first appearance of this idea dates to the early 1980s in the so-called \emph{semi-random method}, see the work of Ajtai, Koml\'os, Szemer\'edi~\cite{AKS81} from 1981 for finding a large independent set in a triangle-free graph). In 1985, R\"odl proved the Erd\H{o}s-Hanani conjecture using a similar idea by building a partial Steiner system one little `nibble' at a time. R\"odl's work was generalized over the ensuing decades: first to hypergraph matchings by Frankl and R\"odl~\cite{FR85} in 1985, then to edge-coloring hypergraphs by Pippenger and Spencer~\cite{PS89} in 1989, and then to list-edge-coloring hypergraphs by Kahn~\cite{K96} in 1996. See the survey of Kang, Kelly, K\"uhn, Methuku, and Osthus for more history of nibble~\cite{KKKMO21}. Here is the nibble theorem we require for Existence specialized to the setting of designs. Note for an $r$-graph $G$, we let $K_q^r(G)$ denote the set of $K_q^r$-cliques of $G$.

\begin{theorem}[Nibble for Designs]\label{thm:NibbleDesigns}
$\forall~q > r\ge 1, \beta \in (0,1)$, $\exists~\alpha\in (0,1)$ s.t. for all large enough $D$:
\vskip.05in
\noindent If $G\subseteq K_n^r$ and $\mathcal{H}\subseteq K_q^r(G)$ s.t.~every edge of $G$ is in $D \cdot \left(1 \pm D^{-\beta}\right)$ cliques of $\mathcal{H}$ while every pair of edges of $G$ is in at most $D^{1-\beta}$ cliques of $\mathcal{H}$, then $\exists$~a $K_q^r$-packing of $G$ of size at least $\frac{e(G)}{\binom{q}{r}} \cdot \left(1-D^{-\alpha}\right)$. 
\end{theorem}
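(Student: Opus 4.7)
The plan is to reduce the statement to a standard nibble/matching theorem for near-regular hypergraphs with bounded codegree, so that Theorem 1.3 is essentially a packaging of the Pippenger--Spencer~\cite{PS89} (or Kahn~\cite{K96}) result in the language of $K_q^r$-cliques. Concretely, I would build an auxiliary $\binom{q}{r}$-uniform hypergraph $\mathcal{H}^\ast$ whose vertex set is $E(G)$ and whose hyperedges are the $\binom{q}{r}$-subsets of $E(G)$ arising as the edge-set of a clique in $\mathcal{H}$. Then a $K_q^r$-packing in $G$ using cliques from $\mathcal{H}$ corresponds exactly to a matching in $\mathcal{H}^\ast$, and the packing size equals the matching size.

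Next I would translate the two numerical hypotheses into the conditions of the nibble theorem on $\mathcal{H}^\ast$. The degree of a vertex $e\in E(G)$ in $\mathcal{H}^\ast$ equals the number of cliques of $\mathcal{H}$ containing $e$, which by hypothesis lies in $D\cdot(1\pm D^{-\beta})$; hence $\mathcal{H}^\ast$ is $(1\pm D^{-\beta})D$-regular. The codegree of a pair $\{e,e'\}\subseteq E(G)$ equals the number of cliques of $\mathcal{H}$ containing both $e$ and $e'$, which is at most $D^{1-\beta}$ by hypothesis. Thus $\mathcal{H}^\ast$ satisfies a codegree bound of $D^{1-\beta}=o(D)$, uniformly as $D\to\infty$.

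Now I would invoke Pippenger--Spencer (or Kahn's list-edge-colouring strengthening) applied to $\mathcal{H}^\ast$: any $k$-uniform hypergraph that is $(1\pm o(1))D$-regular with codegrees $o(D)$ admits a matching covering all but a $o(1)$ fraction of its vertex set, with a quantitative $D^{-\alpha}$ error term depending polynomially on the regularity and codegree parameters $D^{-\beta}, D^{-\beta}$. Choosing $\alpha=\alpha(q,r,\beta)\in(0,1)$ sufficiently small (extracted from the quantitative version of Pippenger--Spencer), this produces a matching in $\mathcal{H}^\ast$ of size at least $\frac{|V(\mathcal{H}^\ast)|}{\binom{q}{r}}\cdot(1-D^{-\alpha}) = \frac{e(G)}{\binom{q}{r}}\cdot(1-D^{-\alpha})$, and this matching is the desired $K_q^r$-packing of $G$.

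The only non-routine step is verifying that the quantitative nibble theorem from the literature gives a polynomial error term $D^{-\alpha}$ (rather than merely $o(1)$) with $\alpha$ depending only on $q,r,\beta$; this is where I would be most careful, since the statement of Theorem 1.3 asks for an explicit polynomial rate. Both Pippenger--Spencer and the iterated-nibble analyses in~\cite{FR85,PS89,K96} provide such polynomial rates provided $\beta$ and the codegree exponent $\beta$ are bounded away from $0$, so the reduction goes through with $\alpha$ a small explicit function of $\beta$, $q$, and $r$.
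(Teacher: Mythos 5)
Your reduction to a matching problem in the auxiliary $\binom{q}{r}$-uniform ``design hypergraph'' (vertices $=E(G)$, hyperedges $=$ cliques of $\mathcal{H}$) followed by a quantitative Pippenger--Spencer-type nibble is exactly how this theorem is meant to be obtained: the paper states it without proof as the general hypergraph-matching nibble specialized to designs, and indeed later names this very auxiliary object the ``Design Hypergraph.'' The point you rightly flag---needing a polynomial error rate $D^{-\alpha}$ rather than $o(1)$---is covered by the standard quantitative forms of the nibble in the design-theory literature (with regularity defect and codegree bounded by $D^{-\beta}$ and $D^{1-\beta}$ as you note), so your argument is sound and coincides with the intended route.
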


If we apply the above theorem in the case $D:= \rho\cdot \binom{n}{q-r}$ where $\rho$ is some fixed constant in $(0,1]$, then the statement may be simplified further as follows since every pair of edges is in at most $\binom{n}{q-r-1} = \Theta(n^{q-r-1}) \le n^{q-r-1/2}$ if $n$ is large enough.

\begin{corollary}\label{cor:NibbleDesignsSpecific}
$\forall~q > r\ge 1,~\rho \in (0,1]$, $\exists~~\alpha\in (0,1)$ s.t. for all large enough $n$: If $G\subseteq K_n^r$ and $\mathcal{H}\subseteq K_q^r(G)$ s.t.~every edge of $G$ is in $\left(\rho \pm n^{-1/3}\right) \binom{n}{q-r}$ cliques of $\mathcal{H}$, then $\exists~$a $K_q^r$-packing of $G$ of size at least $\frac{e(G)}{\binom{q}{r}} \cdot \left(1-n^{-\alpha}\right)$. 
\end{corollary}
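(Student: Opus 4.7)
The plan is to deduce the corollary as a direct specialization of Theorem \ref{thm:NibbleDesigns} with the choice $D := \rho \binom{n}{q-r}$, selecting the theorem's parameter $\beta$ small enough (depending only on $q-r$) that both hypotheses of the theorem follow from the single cleaner hypothesis of the corollary, and then rewriting the resulting packing-size bound in terms of $n$ instead of $D$.

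First I would fix $q > r \geq 1$ and $\rho \in (0,1]$, set $\beta := \frac{1}{6(q-r)}$, and let $\alpha' \in (0,1)$ be the constant produced by Theorem \ref{thm:NibbleDesigns} applied with this $q, r, \beta$. Given $n$ large, put $D := \rho \binom{n}{q-r}$, which satisfies $D = \Theta_{q,r,\rho}(n^{q-r})$ and hence is itself large for large $n$. Next I would verify the two hypotheses of the Nibble theorem. For the degree condition, the corollary's tolerance $n^{-1/3}\binom{n}{q-r}$ equals $\rho^{-1} n^{-1/3} \cdot D$, so it suffices to check $\rho^{-1} n^{-1/3} \leq D^{-\beta}$; taking logs, this reduces to $\beta(q-r)\log n + O(1) \leq \tfrac{1}{3}\log n$, which holds for large $n$ by our choice of $\beta$. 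For the codegree condition, any pair of distinct $r$-edges of $G$ lies in at most $\binom{n}{q-r-1} \leq n^{q-r-1/2}$ cliques of $K_q^r$, as noted in the remark preceding the corollary; and $n^{q-r-1/2} \leq D^{1-\beta}$ again follows for large $n$ from $\beta(q-r) < \tfrac{1}{2}$.

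With both hypotheses verified, Theorem \ref{thm:NibbleDesigns} produces a $K_q^r$-packing of $G$ of size at least $\frac{e(G)}{\binom{q}{r}}(1 - D^{-\alpha'})$. Finally I would translate: since $D^{-\alpha'} = \bigl(\rho \binom{n}{q-r}\bigr)^{-\alpha'} = O_{q,r,\rho}(n^{-\alpha'(q-r)})$, choosing $\alpha := \tfrac{1}{2}\alpha'(q-r) \in (0,1)$ gives $D^{-\alpha'} \leq n^{-\alpha}$ for all sufficiently large $n$, yielding the claimed bound $\frac{e(G)}{\binom{q}{r}}(1 - n^{-\alpha})$.

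There is no genuine obstacle here; the argument is pure bookkeeping, trading the parameter $D$ for the parameter $n$. The only point that needs any care is choosing $\beta$ small enough that the $D^{-\beta}$ degree-tolerance absorbs the $n^{-1/3}$ relative error and that the codegree bound $n^{q-r-1/2}$ fits under $D^{1-\beta}$; both constraints are of the form $\beta(q-r) < c$ for an explicit constant $c$, so any $\beta < \tfrac{1}{3(q-r)}$ works and $\beta = \tfrac{1}{6(q-r)}$ provides a safe choice.
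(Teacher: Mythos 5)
Your proposal is exactly the paper's route: the paper derives the corollary from Theorem~\ref{thm:NibbleDesigns} by setting $D:=\rho\binom{n}{q-r}$ and noting that every pair of edges lies in at most $\binom{n}{q-r-1}\le n^{q-r-1/2}$ cliques, which is precisely your bookkeeping with a suitably small $\beta$. The only nit is your final claim that $\alpha:=\tfrac{1}{2}\alpha'(q-r)$ lies in $(0,1)$, which can fail when $q-r$ is large; take $\alpha:=\min\bigl\{\tfrac{1}{2}\alpha'(q-r),\tfrac{1}{2}\bigr\}$ instead, and the same estimate $D^{-\alpha'}\le n^{-\alpha}$ goes through.
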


The idea of absorbers is that we can complete an almost object into a full object by reserving a magical structure called an `absorber' that can absorb whatever is not in the almost object. While its origins can be traced to earlier works in the 1980s and 1990s, it has been said that R\"odl, Ruci\'nski and Szemer\'edi~\cite{RRS06} in 2006 popularized and codified the method in their work on finding perfect matchings in high minimum degree hypergraphs. Since then, the method has exploded in popularity being used for finding Hamilton cycle decompositions, decompositions into trees, designs, rainbow matchings, transversals in Latin squares and many more. 

We now recall the relevant absorber definitions for $K_q^r$-decompositions. First we need the definition of an \emph{absorber} for a specific leftover $L$ of nibble.

\begin{definition}[Absorber] Let $L$ be a $K_q^r$-divisible $r$-graph.  An $r$-graph $A$ is a \emph{$K_q^r$-absorber} for $L$ if $V(L)$ is independent in $A$ and both $A$ and $A\cup L$ admit $K_q^r$-decompositions.    
\end{definition}

More generally, we require an \emph{omni-absorber} that can absorb any (divisible) leftover $L$ located inside a subgraph $X$ as follows.

\begin{definition}[Omni-Absorber]\label{def:OmniAbsorbers}
An $r$-graph $A$ is a \emph{$K_q^r$-omni-absorber} for an $r$-graph $X$ with \emph{decomposition family} $\mathcal{F}_A$ and \emph{decomposition function} $\mathcal{Q}_A$  if $X$ and $A$ are edge-disjoint and for every $K_q^r$-divisible subgraph $L$ of $X$, $\mathcal{Q}_A(L)\subseteq \mathcal{F}_A$ is a $K_q^r$-decomposition of $A\cup L$. 
\end{definition}

While the existence of absorbers follows from high minimum degree generalizations of the Existence Conjecture, their existence can be proved in their own right and thus be used toward a proof of Existence as follows.

\begin{theorem}\label{thm:AbsorberExistence}  $\forall~q > r\ge 1$: If $L$ is a $K_q^r$-divisible $r$-graph, then there exists a $K_q^r$-absorber for $L$.
\end{theorem}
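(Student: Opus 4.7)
The plan is to build the absorber $A$ by first identifying $L$ with a signed integer combination of $K_q^r$-cliques, and then realizing this formal identity as a genuine simple edge-disjoint $r$-graph via \emph{transformer} gadgets. The starting point is algebraic: the $K_q^r$-divisibility of $L$ is precisely the condition guaranteeing that the indicator vector $\mathbf{1}_L$ lies in the integer lattice spanned by the indicator vectors of $K_q^r$-cliques on a sufficiently rich ambient vertex set $V(L)\cup W$, where $W$ is a large fresh set disjoint from $V(L)$. So first I would fix such an ambient set and extract cliques $C_1,\ldots,C_m$ together with signs $\epsilon_i\in\{\pm 1\}$ satisfying $\mathbf{1}_L=\sum_i \epsilon_i \mathbf{1}_{C_i}$. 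Equivalently, writing $C_+:=\bigsqcup_{\epsilon_i=+1}C_i$ and $C_-:=\bigsqcup_{\epsilon_i=-1}C_i$, the two \emph{multigraphs} $A_0:=C_-$ and $A_0\cup L = C_+$ are both $K_q^r$-decomposable by construction, giving a multigraph prototype of the theorem.

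Turning this prototype into a simple $r$-graph with $V(L)$ independent is the combinatorial core. I would place each clique $C_i$ on a vertex-disjoint copy outside of $V(L)$ wherever possible, using vertices of $V(L)$ only where the clique is needed to cover an edge of $L$. This destroys the literal identity, but the resulting realizations of $C_+$ and $L\cup C_-$ differ only by a sequence of local \emph{clique swaps}, each replacing one $K_q^r$-copy by another on fresh vertices. Each swap is then bridged by a transformer: a small auxiliary $K_q^r$-decomposable $r$-graph $T$ on additional fresh vertices such that $H_1\cup T$ and $H_2\cup T$ both admit $K_q^r$-decompositions, where $H_1,H_2$ are the two sides of the swap. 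Concatenating all these transformers together with the vertex-disjoint realizations of the $C_i$'s yields an absorber $A$ whose two intended decompositions (of $A$ alone and of $A\cup L$) correspond to parsing the swap sequence in the two possible directions.

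The main obstacle is the transformer construction itself: proving that between any two $K_q^r$-divisible configurations sharing the same boundary on $V(L)$ one can find a transformer, and that transformers glue together without creating multi-edges or introducing edges inside $V(L)$. This is typically handled by an inductive construction starting from a small family of primitive transformers --- the $K_q^r$-analogues of the Pasch trade for Steiner triple systems --- together with bookkeeping to keep divisibility invariant at each step and to introduce enough fresh vertices for simplicity and $V(L)$-independence. Once this transformer machinery is in place, assembling $A$ and verifying its two decompositions is a matter of unpacking the definitions.
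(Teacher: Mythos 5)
Your first step is sound and matches the paper's starting point: $K_q^r$-divisibility yields a signed integral combination of cliques (the Graver--Jurkat/Wilson theorem, Theorem~\ref{thm:Integral}), i.e.\ a ``multi-absorber'' in which $C_+$ and $C_-=A_0$ play the roles of $L\cup A$ and $A$. The genuine gap is everything after that. Your reduction of the multigraph prototype to a simple absorber rests on two unproved claims: (i) that one can re-place the cliques $C_i$ on fresh vertices so that the two realizations of $C_+$ and $L\cup C_-$ ``differ only by a sequence of local clique swaps,'' and (ii) that every such swap can be bridged by a \emph{transformer}. Claim (i) is already delicate, because the integral decomposition typically has cancellations on edges inside $V(L)$ that are not in $L$, and on edges of $L$ the positive and negative cliques overlap with multiplicity; simply ``using vertices of $V(L)$ only where the clique covers an edge of $L$'' destroys these cancellations and there is no precise statement of what boundary data a swap preserves. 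Claim (ii) is the actual content of the theorem: asserting that transformers are ``typically handled by an inductive construction starting from a small family of primitive transformers --- the $K_q^r$-analogues of the Pasch trade'' assumes exactly what must be proved. For $r\ge 3$ no elementary family of primitive trades is known to generate all the needed exchanges; in the iterative-absorption literature the construction of transformers (Glock, K\"uhn, Lo, and Osthus~\cite{GKLO16}) takes roughly ten pages and, moreover, proceeds by induction on the uniformity assuming the Existence Conjecture for smaller uniformities, so it cannot be waved through as routine bookkeeping.

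By contrast, the paper's proof (following~\cite{DKP24}) avoids transformers entirely: after the integral-decomposition step it attaches an anti-edge $e^-$ to every edge $e$ of $L\cup A$, which converts both decompositions of the multi-absorber into anti-clique decompositions and thereby reduces the whole problem to constructing a $K_q^r$-absorber for a \emph{single explicit graph}, the anti-clique. That one absorber is then built explicitly from a booster (a Cauchy-matrix construction), layered to make it orthogonal, with copies attached along the root clique. If you want to salvage your route, you must either supply the transformer machinery of~\cite{GKLO16} in full (accepting its inductive dependence on lower uniformities) or replace your swap-and-transformer step by a reduction, as in the paper, to absorbers for one concrete gadget that you can construct by hand.
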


Barber, K\"uhn, Lo, and Osthus~\cite{BKLO16} proved the above theorem for $r=2$ (namely in the graph case) for which they gave an explicit construction. A proof of Theorem~\ref{thm:AbsorberExistence} can be extracted from the second proof of Existence by Glock, K\"uhn, Lo, and Osthus~\cite{GKLO16} in about 10 pages (assuming that the Existence Conjecture holds for smaller uniformities); recently, Delcourt, Kelly, and Postle~\cite{DKP24} provided a short self-contained proof (around 2 pages) inspired by Keevash's original work~\cite{K14} which we discuss in more detail later. 

We note then that the existence of omni-absorbers follows from the existence of absorbers (provided $X$ is confined to a small subset of the vertices) by taking the union of $K_q^r$-absorbers (vertex-disjoint outside of $X$) for each $K_q^r$-divisible subgraph $L$ of $X$. However, this construction is wildly inefficient (requiring $v(X)$ to be at most logarithmic in $n$). The development of polynomially efficient and even linearly efficient omni-absorbers is one of the central pillars of the new refined absorption framework as we next discuss.

\subsection{Further Ideas in the Proofs of Existence.} Armed with the two spectacular ideas of \emph{nibble} and \emph{absorbers}, how should one proceed to prove the Existence Conjecture? Nibble will find a partial $(n,q,r)$-Steiner system using almost all the edges of $K_n^r$, however there will still remain a set of \emph{leftover} edges. Inspired by absorption, it is natural to dream that we could find a magical `absorbing' structure that will be able to decompose this leftover. Indeed, Keevash's original proof~\cite{K14} may be interpreted as building an omni-absorber in an algebraic manner for one specifically constructed $X$ that meets certain algebraic conditions. 

On the other hand, the second proof of the Existence Conjecture by Glock, K\"{u}hn, Lo, and Osthus~\cite{GKLO16}, as well as many other recent breakthroughs in design theory and hypergraph decompositions, proceeds via \emph{iterative absorption}, in which the location of potential leftovers of a decomposition problem are iteratively reduced until for one final (small) location of the leftover we finally absorb via an omni-absorber set aside at the beginning. The method of iterative absorption was first introduced by Knox, K\"{u}hn, and Osthus in~\cite{KKO15} and first used for decompositions by K\"{u}hn and Osthus in~\cite{KO13}. 

There are two natural approaches for iterative absorption in designs. The first (arguably more natural to the problem) is to iteratively reduce the location via nested sets of edges $E_0\supset E_1 \supset E_2 \ldots \supset E_k= X$ (called an \emph{edge vortex}); the second approach is via nested sets of vertices $V_0\supset V_1 \supset V_2\ldots \supset V_k =X$ (called a \emph{vertex vortex}). The issue with the edge vortex approach is that there is a limit to how small the edge sets can be; namely if $E_i$ is constructed randomly with probability $p_i$, one would need $p_i$ large enough (even for triangles it would need to be at least $n^{-1/2}$) to guarantee that $E_i$ contains $K_q^r$'s. The benefit of a vertex vortex meanwhile is that $v(X)$ may be small, even constant sized, and hence the use of inefficient omni-absorbers is permitted. The cost is that the remaining edges from $V_i\setminus V_{i+1}$ to $V_{i+1}$ must be decomposed and these edges must be decomposed via $K_{q-j}^{r-j}$ factors inside $V_{i+1}$ where $j$ is the number of vertices of the edge in $V_i\setminus V_{i+1}$; this is the technical heart of the Cover Down Lemma of Glock, K\"{u}hn, Lo, and Osthus~\cite{GKLO16}. One issue with the cover down approach is that it does not produce a black-box theorem (as in Theorem~\ref{thm:RefinedEfficientOmniAbsorber} below) and so the whole proof and method must be modified for various variants. 

The first approach of an edge vortex had not been generally used for designs due to only having access to inefficient omni-absorbers. One of the main breakthroughs of refined absorption is the ability to construct extremely efficient omni-absorbers as follows. Note that we also need our omni-absorber $A$ to have low maximum degree if we hope to apply nibble to $K_n^r\setminus (X\cup A)$. We also note that Keevash's new alternate proof of efficient omni-absorbers~\cite{K14} does not yield linear efficiency as in the following theorem but only a certain polynomial proportionality (which is sufficient for a proof of Existence). 

\begin{thm}[Efficient Omni-Absorber~\cite{DPI}]\label{thm:EfficientOmniAbsorber}
$\forall~q > r \ge 1$, $\exists~C\ge 1$: If $X\subseteq  K_n^r$ and $\Delta_{r-1}(X) \le \frac{n}{C}$, then there exists a $K_q^r$-omni-absorber $A$ for $X$ with $\Delta_{r-1}(A) \le C\cdot \max\left\{\Delta_{r-1}(X),~n^{1-1/r}\cdot \log n\right\}$.    
\end{thm}

Aided now by the efficient omni-absorbers of Theorem~\ref{thm:EfficientOmniAbsorber}, we will use the edge vortex approach to provide the refined absorption proof of the Existence Conjecture which we overview in the next section. Since we use an edge vortex instead of a vertex vortex, the cover down step can be replaced by a `nibble with reserves' step. Furthermore, since our omni-absorbers are so efficient, we only use one step; that is, we only need to choose one random `reserve' subset $X=E_1$ of $E_0=E(K_n^r)$. In summary, iterative absorption iteratively \emph{covers down} to $X$ the location of a leftover via random processes; in contrast, our efficient omni-absorbers are themselves constructed via a vertex vortex of sorts; but in contrast, this can be viewed as iteratively \emph{refining down} the omni-absorber (and its \emph{remainder}, the part of the omni-absorber for which an absorption plan is not yet complete) via more structural methods.

That said, even assuming the existence of our magical omni-absorber, there is one last technical hurdle to overcome in the new proof of Existence. The number of cliques of $K_n^r\setminus (X\cup A)$ containing an edge may no longer be the same for every edge (as it was in $K_n^r$). Worse, the size of the initial leftover of nibble is proportional to this irregularity. Since we do not attempt to control the building of the absorber $A$, that irregularity would be proportional to the size of $A$ which is proportional to the size of the reserves $X$; but $X$ needs to be large enough to decompose that initial leftover! This creates a circuit of dependencies, a \emph{hierarchy problem}. Luckily, this problem can be overcome by a ``regularity boosting'' step wherein one cleverly chooses a subset of the cliques in $K_n^r\setminus (A\cup X)$ that is much more regular than the set of all cliques and only run nibble using this special set of cliques. Glock, K\"uhn, Lo, and Osthus~\cite{GKLO16} developed machinery to solve this problem; essentially, they showed with a short ingenious argument that $r$-graphs with minimum $(r-1)$-degree close to $n$ (such as $K_n^r\setminus (X\cup A)$ in our application above) admit low-weight fractional $K_q^r$-decompositions and hence the resulting distribution can be sampled from to be very regular. We discuss this further in the next section as well.

\subsection{Benefits of Refined Absorption and its Black-Box Nature.}

Before moving on to the refined absorption proof of the Existence Conjecture, let us spare some words for the method's applications to other problems as well as its benefits compared to the other approaches. 

While the Existence Conjecture took nearly two centuries to prove, there are still many mountains left to climb -- variants and generalizations of the Existence Conjecture still left to consider. Indeed, as often is the case in mathematics, it is only when we reach the summit of one mountain that we can truly see all the mountains that lay beyond.  

There are three main problems we will consider in this article. The first two are central problems in what may be called \emph{extremal design theory}, an area which inherits from extremal graph theory and generalizes its questions to designs. The third problem is central to what may be called \emph{probabilistic design theory} which inherits from probabilistic graph theory and generalizes its questions to designs. 

\begin{itemize}
    \item \textbf{Minimum Degree Thresholds for $K_q^r$-decompositions}: What is the minimum $(r-1)$-degree that forces a $K_q^r$-divisible $r$-graph on $n$ vertices to admit a $K_q^r$-decomposition?
    \item \textbf{Existence of High Girth Steiner Systems}: Do there exist $(n,q,r)$-Steiner systems of large girth for all $q > r$ and $n$ large enough?
    \item \textbf{Probabilistic Thresholds for Steiner Systems}: What is the minimum probability $p$ such that the binomial random $q$-graph $\mathcal{G}^{(q)}(n,p)$ (i.e.~$q$-sets are chosen independently with probability $p$) contains an $(n,q,r)$-Steiner system asymptotically almost surely?
\end{itemize}

Now Theorem~\ref{thm:EfficientOmniAbsorber} is not sufficient for these applications, at least to be used as a black box precluding the need to redo the proof of the main absorption theorem. However, Delcourt and Postle's proof of Theorem~\ref{thm:EfficientOmniAbsorber} proceeds via induction on the uniformity and so requires a certain stronger `refinedness' property of the omni-absorber for the purposes of induction. It is this refinedness property that makes the method black-box. Here is the key definition.

\begin{definition}[$C$-refined]
A $K_q^r$-omni-absorber $A$ is \emph{$C$-refined} if every edge of $X\cup A$ is in at most $C$ elements of the decomposition family $\mathcal{F}_A$.    
\end{definition} 

Here then is the true black-box theorem from the work of Delcourt and Postle~\cite{DPI} which generalizes Theorem~\ref{thm:EfficientOmniAbsorber} by proving the existence of \textbf{refined} efficient omni-absorbers. (We note Keevash asserts that the omni-absorbers in his new short proof are also $C$-refined for some constant $C$ but does not explicitly derive this.)

\begin{thm}[Refined Efficient Omni-Absorber~\cite{DPI}]\label{thm:RefinedEfficientOmniAbsorber}
$\forall~q > r \ge 1$, $\exists~C\ge 1$: If $X\subseteq  K_n^r$ and $\Delta_{r-1}(X) \le \frac{n}{C}$, then there exists a $C$-refined $K_q^r$-omni-absorber $A$ for $X$ with $\Delta_{r-1}(A) \le C\cdot \max\left\{\Delta_{r-1}(X),~n^{1-1/r}\cdot \log n\right\}$.     
\end{thm}

The key idea then is that Theorem~\ref{thm:RefinedEfficientOmniAbsorber} can be used a black box -- a template for the absorption in the various problems mentioned above. Indeed, Theorem~\ref{thm:RefinedEfficientOmniAbsorber} can be viewed as a generalization of Montgomery's \emph{template method}~\cite{M19b}, also called \emph{distributive absorption}, to the setting of designs (his key theorem provides a refined efficient omni-absorber theorem in the $1$-uniform case but in the much harder partite setting; meanwhile the base $1$-uniform case of Theorem~\ref{thm:RefinedEfficientOmniAbsorber} is much easier and admits a simple construction). Distributive absorption has been used to great success in various hypergraph matching problems in recent years and so perhaps it is no surprise that Theorem~\ref{thm:RefinedEfficientOmniAbsorber} would similarly have wide applications. 

How does one modify the omni-absorber for the settings above? For high minimum degree, this is accomplished by \emph{re-embedding} the omni-absorber from Theorem~\ref{thm:RefinedEfficientOmniAbsorber} into the host graph; this only requires showing that certain gadgets we call \emph{fake edges} as well as absorbers can be embedded into the host graph and so becomes an embedding problem.

For the other applications of high girth and probability thresholds, the key object for our applications is what we call a \emph{$K_q^r$-booster} which is essentially a non-trivial $K_q^r$-absorber of a clique $K_q^r$ defined as follows.

\begin{definition}[Booster]Let $q > r\ge 1$ be integers. A \emph{$K_q^r$-booster} is an $r$-graph $B$ along with two $K_q^r$-decompositions $\mathcal{B}_{\rm on},\mathcal{B}_{\rm off}$ of $B$ such that $\mathcal{B}_{\rm on}\cap \mathcal{B}_{\rm off} =\emptyset$ (that is, they do not have a $K_q^r$ in common). \end{definition}

While this may at first seem extraneous, the idea is that by embedding boosters on the cliques of the decomposition family of our omni-absorber, we may exchange the cliques given by our black-box theorem for cliques with more desirable properties. To that end, it is helpful to think of a booster as \emph{rooted} by specifying a copy $S$ of $K_q^r$ in $\mathcal{B}_{\rm off}$; then if we attach a copy of $B\setminus S$ at $S$, this can be used to boost the properties of the clique $S$. Hence when building a decomposition, if we originally wanted to use the clique $S$ in the decomposition $\mathcal{Q}_A(L)$, we could instead replace it with $\mathcal{B}_{{\rm on}}$ which decomposes $B$; where as if we do not desire to use $S$, we simply use $\mathcal{B}_{\rm off}\setminus \{S\}$ to decompose the edges of $B\setminus S$. In this way, we can replace $S$ with the cliques of the two decompositions $\mathcal{B}_{\rm on}$ and $\mathcal{B}_{\rm off}\setminus \{S\}$ which may have more desirable properties than $S$ (such as being high girth or fairly random). We note this embedding of a separate booster for each clique of the decomposition family \emph{is only possible since the number of cliques of the decomposition family is small as the omni-absorber is $C$-refined!}

Thus for the various applications, the absorption step has been black-boxed with the new challenge being various embedding problems and ensuring the omni-absorber has the desired properties. Intriguingly, this usually means the various solutions developed in each problem can be layered on top of each other --- that our omni-absorber can be made to simultaneously have the desired properties. \emph{This could thus allow the proof of one unifying theorem of designs}, a monumental project we have begun by somewhat uniting the first two problems and the dream of which we discuss further in the concluding remarks.

\section{Proof of the Existence Conjecture: A Refined Absorption Framework.}

Now that we have discussed the ideas, we can present the outline of the refined absorption proof of the Existence Conjecture, writing the steps in the order they should be performed as follows. 

\vskip.1in
\noindent {\bf Proof structure for the `refined absorption' proof of the Existence Conjecture:}
\vskip.1in
\begin{enumerate}\itemsep.1in
    \item[(1)] {\bf `Reserve'} a random subset $X$ of $E(K_n^r)$ uniformly with some small (well-chosen) probability $p$.
    \item[(2)] Construct an {\bf omni-absorber} $A$ of $X$ with small maximum $(r-1)$-degree via Theorem~\ref{thm:RefinedEfficientOmniAbsorber}.
    \item[(3)] {\bf ``Regularity boost''} $J:=K_n^r\setminus (A\cup X)$, namely by finding a set of cliques of $J$ such that every edge in $J$ is in roughly the same (dense) number of cliques in the set. 
    \item[(4)] Apply the {\bf ``Nibble with Reserves''} theorem to find a $K_q^r$ packing of $K_n^r\setminus A$ covering $K_n^r\setminus (A\cup X)$ and then extend this to a $K_q^r$-decomposition of $K_n^r$ by definition of omni-absorber.  
\end{enumerate}

\noindent \emph{How difficult are each of these steps to prove?} Step (1) is quite simple (a few paragraphs), choosing each edge independently with probability $p$ for a well-chosen $p$ and then using concentration inequalities to show that with high probability $X$ has the desired properties (low maximum degree and that every edge of $K_n^r$ is in many cliques with edges of $X$). The following lemma suffices for step (1).

\begin{lemma}[Reserves]\label{lem:RandomX}
$\forall~q > r\ge 1$, $\exists~\gamma \in (0,1)$ s.t. for all $n$ large enough: If $p\in [n^{-\gamma},1)$, then there exists $X\subseteq K_n^r$ with $\Delta(X)\le 2pn$ such that every $e\in K_n^r\setminus X$ is in at least $\frac{1}{2}\cdot p^{\binom{q}{r}-1}\cdot \binom{n}{q-r}$ $K_q^r$-cliques of $X\cup \{e\}$.
\end{lemma}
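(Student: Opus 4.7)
The plan is to take $X$ to be the binomial random subgraph of $K_n^r$, including each edge independently with probability $p$; both properties of $X$ will then hold simultaneously with high probability, and I set $\gamma := 1/(2r)$ (any constant smaller than $1/r$ will suffice).

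For the maximum codegree bound, fix an $(r-1)$-set $S \subseteq V(K_n^r)$. The number of edges of $X$ containing $S$ is binomial with mean $p(n-r+1) \le pn$, so a Chernoff bound gives $\Pr[|\{e \in X : S \subseteq e\}| > 2pn] \le \exp(-\Omega(pn))$. Since $pn \ge n^{1 - 1/(2r)}$ dwarfs $r \log n$, a union bound over the $\binom{n}{r-1}$ choices of $S$ yields the desired degree condition with high probability.

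For the clique-count bound, fix $e \in K_n^r$ and let $Z_e$ count the $K_q^r$-cliques $C$ containing $e$ whose remaining $\binom{q}{r} - 1$ edges all lie in $X$. Since the edges of $K_n^r$ are chosen independently, the event $e \notin X$ does not affect the distribution of $Z_e$, so it suffices to prove a lower-tail bound on $Z_e$ over the random choice of the other edges and then union-bound over $e$. The expectation is $\mathbb{E}[Z_e] = \binom{n-r}{q-r} p^{\binom{q}{r}-1}$, which for $n$ large exceeds $\frac{3}{4} p^{\binom{q}{r}-1} \binom{n}{q-r}$. I would then apply Janson's inequality: two candidate cliques $C, C'$ through $e$ are correlated precisely when $|C \cap C'| \ge r + 1$, and stratifying dependent pairs by $k := |C \cap C'| - r \ge 1$, a short calculation shows that the dominant contribution to the Janson correlation sum $\bar\Delta$ comes from $k = 1$ (there are $\Theta(n^{2(q-r)-1})$ such ordered pairs, each contributing $p^{2\binom{q}{r}-r-2}$), giving $\mu^2/\bar\Delta = \Omega(p^r n)$. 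Janson then yields $\Pr[Z_e < \frac{1}{2} \mathbb{E}[Z_e]] \le \exp(-\Omega(p^r n))$, and with $p \ge n^{-1/(2r)}$ we have $p^r n \ge n^{1/2}$, which easily survives a union bound over the $O(n^r)$ edges $e$.

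The only step requiring genuine care is the stratified estimate of $\bar\Delta$, and the scaling there is precisely the source of the constraint $\gamma < 1/r$; no further ideas appear necessary. As an alternative, Kim--Vu polynomial concentration would also work, but Janson gives the cleanest one-sided bound for this setup.
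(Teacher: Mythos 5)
Your overall strategy (take $X$ binomial with edge-probability $p$, Chernoff for the $(r-1)$-degrees, a lower-tail bound via Janson for the clique counts, then a union bound over edges) is exactly the route the paper intends for this lemma, and your reading of $\Delta(X)$ as the maximum $(r-1)$-degree is the intended one. However, your choice of $\gamma$ and the accompanying claim that the $k=1$ stratum dominates, so that ``any constant smaller than $1/r$ will suffice,'' are wrong, and this is where the proof breaks. The constant $\gamma$ must shrink with $q$, not only with $r$: the binding constraint comes not from $k=1$ but from the largest overlaps, in particular from the mean itself. For the conclusion (and your Janson bound) to have any chance you need $\mu := \binom{n-r}{q-r}p^{\binom{q}{r}-1} = \omega(\log n)$, i.e.\ $\gamma\bigl(\binom{q}{r}-1\bigr) < q-r$; more generally each stratum forces $n^{k}p^{\binom{r+k}{r}-1}=\omega(\log n)$ for all $1\le k\le q-r$, and since $k/\bigl(\binom{r+k}{r}-1\bigr)$ is decreasing in $k$, the worst case is $k=q-r$, giving roughly $\gamma < (q-r)/\bigl(\binom{q}{r}-1\bigr)$, which is far smaller than $1/(2r)$ once $q$ is large. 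Concretely, with your $\gamma=1/(2r)$ take $r=2$, $q=8$, $p=n^{-1/4}$: then $\mu \approx n^{6-27/4}\to 0$, so a typical edge lies in no $K_q$-clique of $X\cup\{e\}$ at all and the conclusion fails; moreover your claimed estimate $\Pr[Z_e<\tfrac12\mu]\le \exp(-\Omega(p^rn))$ is false there, since $\Pr[Z_e=0]\to 1$ while $\exp(-\Omega(p^rn))\to 0$. So the neglected strata (equivalently the $\mu$ term in Janson's denominator) cannot be waved away.

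The repair is routine and salvages everything else: choose $\gamma$ small enough in terms of both $q$ and $r$ (any $\gamma<(q-r)/\binom{q}{r}$ will do), check $n^{k}p^{\binom{r+k}{r}-1}\ge n^{\Omega(1)}$ for every $1\le k\le q-r$ rather than only for $k=1$, and then the Chernoff step for the codegrees, the observation that the event $e\notin X$ is independent of $Z_e$, the Janson application, and the union bound over the $O(n^r)$ edges all go through exactly as you wrote them, matching the paper's (sketched) argument of random reservation plus concentration.
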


Step (2), namely the proof of Theorem~\ref{thm:RefinedEfficientOmniAbsorber}, is the longest (and newest) and for us requires our method of refined absorption (Keevash's new proof~\cite{K24} provides an alternate method to proving the required theorem from step (2) while the other steps of Keevash's new proof are essentially the same). First though we also require a proof of the existence of absorbers (Theorem~\ref{thm:AbsorberExistence}) as discussed in the introduction. We discuss this and the proof of Theorem~\ref{thm:RefinedEfficientOmniAbsorber} in the next section. 

Step (3) follows from a special case of the ``Boost Lemma'' (Lemma 6.3 in~\cite{GKLO16}) of Glock, K\"{u}hn, Lo, and Osthus~\cite{GKLO16}; the proof while ingenious is relatively short (a few pages). Namely one can use that to show that a graph $G$ with minimum degree $\delta(G)\ge (1-\varepsilon)n$ for some small enough $\varepsilon$ has a very regular set of cliques - meaning every edge is in roughly the same amount of cliques where the irregularity is polynomially small (as required by ``nibble with reserves''). This essentially follows by showing that high minimum degree hypergraph has a low-weight fractional decomposition (discussed later in the context of Nash-Williams' Conjecture) and then sampling from the resulting distribution. The following lemma then suffices for a proof of the Existence Conjecture.

\begin{lemma}[Boost]\label{lem:RegBoost}
$\forall~q>r\ge 1$, $\exists~\varepsilon \in (0,1)$ s.t.~for all $n$ large enough: If $J\subseteq K_n^r$ with $\delta(J) \ge (1-\varepsilon)n$, then there exists $\mathcal{H} \subseteq K_q^r(J)$ such that every $e\in J$ is in  $\left(\frac{1}{2}\pm n^{-1/3}\right)\cdot \binom{n-r}{q-r}$ cliques of $\mathcal{H}$.
\end{lemma}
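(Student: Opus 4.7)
The plan is to produce $\mathcal{H}$ by randomized rounding of a low-weight fractional $K_q^r$-decomposition of $J$. The structural input I would establish first is that for $\varepsilon = \varepsilon(q,r)$ sufficiently small, any $J \subseteq K_n^r$ with $\delta(J) \ge (1-\varepsilon)n$ admits a weighting $\omega : K_q^r(J) \to [0,\infty)$ satisfying $\sum_{K \ni e} \omega(K) = 1$ for every edge $e \in J$ and $\omega(K) \le 2 \cdot \binom{n-r}{q-r}^{-1}$ for every clique $K$. This is precisely the low-weight fractional decomposition statement alluded to in the paper and is the real content of the lemma.

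Given such an $\omega$, write $N := \binom{n-r}{q-r}$ and form $\mathcal{H}$ by including each $K \in K_q^r(J)$ independently with probability $p(K) := \tfrac{N}{2}\cdot \omega(K)$. By the upper bound on $\omega$ we have $p(K) \le 1$, and by the edge-balance condition $\mathbb{E}[|\{K \in \mathcal{H} : K \ni e\}|] = \tfrac{N}{2}$ for every $e \in J$, which is exactly the mean prescribed by the lemma. For concentration, $\deg_\mathcal{H}(e)$ is a sum of independent Bernoulli variables, so Chernoff gives
\[
\Pr\!\left[\,\bigl|\deg_\mathcal{H}(e) - \tfrac{N}{2}\bigr| > n^{-1/3} N\,\right] \le 2\exp\!\left(-\Omega(n^{-2/3} N)\right).
\]
Since $N \ge c_{q,r}\, n^{q-r}$ and $q > r$ forces $q-r-\tfrac23 \ge \tfrac13 > 0$, we have $n^{-2/3} N \ge n^{1/3} \gg \log n$, so the failure probability per edge is much smaller than $n^{-r}$; a union bound over the at most $n^r$ edges of $J$ then yields the existence of a valid $\mathcal{H}$.

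The main obstacle is the construction of the low-weight fractional decomposition in the first step. A natural starting point is the uniform weighting $\omega_0(K) := 1/N$, which automatically satisfies the pointwise upper bound but whose edge sums deviate from $1$ by errors determined by local codegree irregularities of $J$. To correct these deviations while preserving a bound $O(1/N)$ on weights, one performs local ``swap'' adjustments between pairs of cliques that differ in a single vertex: the hypothesis $\delta(J) \ge (1-\varepsilon)n$ guarantees an abundance of such swap pairs, which lets one absorb any signed correction of total mass $O(\varepsilon)$ per edge into weight shifts of magnitude $O(1/N)$. This absorbing-style correction argument, a low-dimensional cousin of the design-theoretic philosophy used throughout the paper, is the technical heart of the lemma; once it is in hand the rounding step above is routine.
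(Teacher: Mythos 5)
Your second half is fine and is exactly the route the paper intends: the paper proves this lemma by citing the ``Boost Lemma'' of Glock, K\"uhn, Lo, and Osthus~\cite{GKLO16} to get a low-weight fractional $K_q^r$-decomposition and then sampling from it. Your rounding step is correct as written: with $N=\binom{n-r}{q-r}$ and $\omega(K)\le 2/N$, the probabilities $p(K)=\tfrac{N}{2}\omega(K)$ are valid, each edge degree in $\mathcal{H}$ has mean exactly $N/2$, and Chernoff gives failure probability $\exp\left(-\Omega(n^{q-r-2/3})\right)\le \exp\left(-\Omega(n^{1/3})\right)$ per edge, which comfortably survives a union bound over at most $n^r$ edges.

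The gap is in the first step, which you yourself identify as ``the real content'': the existence of a fractional $K_q^r$-decomposition with all weights at most $2/N$ under $\delta(J)\ge(1-\varepsilon)n$. Your uniform-weighting-plus-swaps sketch does not establish this. Starting from $\omega_0\equiv 1/N$, the edge sums are $1\pm O_q(\varepsilon)$, i.e.\ off by a \emph{constant} per edge, while each individual swap between cliques $Q=S\cup\{x\}$ and $Q'=S\cup\{y\}$ can only move $O(1/N)$ of mass and simultaneously lowers the sums on all $\binom{q-1}{r-1}$ edges of $Q$ through $x$ and raises them on all $\binom{q-1}{r-1}$ edges of $Q'$ through $y$. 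So one needs $\Theta(\varepsilon N)$ coordinated swaps per deficient edge, the corrections to overlapping edges are strongly coupled, and nonnegativity plus the cap $2/N$ must be preserved on every clique touched by many corrections; ``abundance of swap pairs'' does not show that such a correction system is simultaneously satisfiable. This is precisely the nontrivial part of the lemma, and the paper does not attempt a swap argument: it outsources it to Lemma~6.3 of~\cite{GKLO16}, whose proof is a different, genuinely clever few-page argument. If you replace your sketch by a citation to (or a proof of) that low-weight fractional decomposition statement, your proposal becomes complete and coincides with the paper's proof; as it stands, the key step is asserted rather than proved.
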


Step (4) is a generalization of the `base' nibble theorem. We note this step could also be done ad hoc via a combination of more advanced nibble theorems and the Lov\'asz Local Lemma (see Keevash~\cite{K24}). Nevertheless, it is useful to codify the two steps of nibble and reserves as one package since a) for the various applications, we will require properties (high girth, randomness) of both steps that cannot be ensured when considering them separately, and b) codifying general black-box theorems for the combined step allows a more unified and less ad hoc approach. We discuss this further in the next subsection.

\subsection{Nibble with Reserves for Designs.}

As mentioned, in order to build an absorber, we desire to force the leftover of nibble into a special reserve set $X$. One can prove an even more general ``Nibble with Reserves'' Theorem for hypergraph matchings. Namely, if one desires a hypergraph matching covering all vertices in $G$ it suffices to have access to an additional reserve hypergraph $G'$ whose edges contain exactly one vertex from $G$. Provided that every vertex in $G$ has large degree in $G'$ and $G'$ has not too large a maximum degree, such a matching can be found. Delcourt and Postle formulated a very general hypergraph matching version of this idea (see~\cite{DPI} for a formal statement) which even implies the coloring version of nibble (the Pippenger-Spencer Theorem~\cite{PS89}) and the even more general list coloring version (Kahn's Theorem~\cite{K96}). 

We omit a statement of the general theorem for brevity but here is that very general theorem specified to designs, where the first hypergraph is the standard ``Design Hypergraph'' of $K_n^r$ (whose vertices are the edges of $K_n^r$ and whose edges are the cliques of $K_n^r$) and the extra reserve hypergraph is the ``reserve cliques''.

\begin{theorem}[Nibble with Reserves for Designs]\label{thm:NibbleReservesDesigns}
$\forall~q > r\ge 1,~\beta \in (0,1)$, $\exists~\alpha\in (0,1)$ s.t.~for all $D$ large enough:
Let $G$ and $\mathcal{H}$ be as in Theorem~\ref{thm:NibbleDesigns}. If there exists $X\subseteq K_n^r\setminus G$ and $\mathcal{H}' \subseteq K_q^r(G\cup X)$ where $|H\cap E(G)|=1$ for all $H\in \mathcal{H}'$ and s.t.~every $e\in G$ is in at least $D^{1-\alpha}$ elements of $\mathcal{H}'$ and every $e\in X$ is in at most $\frac{D}{\beta}$ elements of $\mathcal{H'}$, then there exists a $K_q^r$-packing $G\cup X$ that covers all edges of $G$. 
\end{theorem}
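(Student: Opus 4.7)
My plan is to proceed in two stages. First, I apply Theorem~\ref{thm:NibbleDesigns} to $(G, \mathcal{H})$ with the given parameter $\beta$, obtaining a constant $\alpha_1 = \alpha_1(q,r,\beta) > 0$ and a $K_q^r$-packing $\mathcal{P}_1 \subseteq \mathcal{H}$ whose leftover $L := G \setminus \bigcup \mathcal{P}_1$ satisfies $|L| \leq e(G) \cdot D^{-\alpha_1}$. Second, I cover every edge of $L$ by pairwise edge-disjoint cliques drawn from $\mathcal{H}'$; since each $H \in \mathcal{H}'$ meets $G$ in exactly one edge, each such clique contributes a single $L$-edge and $\binom{q}{r}-1$ $X$-edges, and adding these to $\mathcal{P}_1$ yields the desired $K_q^r$-packing of $G \cup X$ covering all of $G$.

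To execute the second stage, I form the auxiliary $\binom{q}{r}$-uniform hypergraph $\mathcal{A}$ on vertex set $L \cup X$ whose hyperedges are those $H \in \mathcal{H}'$ with $H \cap E(G) \subseteq L$: each hyperedge has exactly one $L$-vertex and $\binom{q}{r}-1$ $X$-vertices; $L$-degrees are $\geq D^{1-\alpha}$, $X$-degrees are $\leq D/\beta$, and the $L$-to-$L$ codegree vanishes. The task reduces to finding a matching in $\mathcal{A}$ saturating $L$. I would build this matching by iterating a nibble-style round on $\mathcal{A}$: sample each surviving hyperedge independently with probability $p \asymp D^{\alpha-1}$, delete both members of any pair sharing an $X$-vertex, append the resulting matching to the running one, remove saturated $L$-vertices and used $X$-edges, and repeat on the trimmed auxiliary hypergraph. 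Standard concentration shows the uncovered portion of $L$ shrinks by a polynomial-in-$D$ factor per round, so after a bounded number of rounds it becomes small enough that a direct greedy sweep finishes: for each remaining $e \in L$, the bound on $X$-degrees forces the number of killed reserve cliques through $e$ to remain strictly below $D^{1-\alpha}$.

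The hard part will be the second stage, since $\mathcal{A}$ is regular only at $L$-vertices while merely upper-bounded at $X$-vertices, so Theorem~\ref{thm:NibbleDesigns} does not apply off the shelf. The delicate bookkeeping is to propagate the $L$-degree lower bound through the iteration: after each nibble round, the expected fraction of reserve cliques killed through any uncovered $e \in L$ is polynomially small in $D$, and concentration shows this holds uniformly with high probability. Choosing $\alpha$ small enough relative to $\alpha_1$ and to the cumulative nibble loss makes all the bounds align simultaneously, both to keep the nibble hypotheses alive across rounds and to guarantee the final greedy sweep succeeds; this asymmetric bookkeeping --- in effect, running one nibble inside another --- is the essence of the nibble-with-reserves argument highlighted in the preceding discussion.
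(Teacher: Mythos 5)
Your two-stage shape (nibble, then finish from the reserve cliques) is the right skeleton, but stage 2 has a genuine gap that traces back to what stage 1 delivers: Theorem~\ref{thm:NibbleDesigns} only bounds the \emph{total size} of the leftover $L$, and that is not enough local information about $L$ for either your inner nibble or your greedy sweep to be justified under the stated degree bounds on $\mathcal{H}'$. The quantities your completion step actually needs are local: for an $X$-edge $f$, the number of reserve cliques through $f$ whose $G$-edge lies in the current leftover, and the codegree $c(f,e)$, the number of reserve cliques containing both $f$ and a fixed leftover edge $e$. The hypotheses give only the cap $D/\beta$ per $X$-edge, and a leftover of small total size can still have its reserve cliques funnel through comparatively few $X$-edges (note $|L|\le e(G)D^{-\alpha_1}$ can be far larger than $D$, e.g.\ $e(G)\approx n^r$, $D\approx n^{q-r}$ with $q<2r$, so smallness of $|L|$ gives no per-$X$-edge leverage). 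In that situation your round with $p\asymp D^{\alpha-1}$ makes no progress: a sampled candidate $H$ of $e$ survives only if no other sampled candidate shares an $X$-edge with it, and the expected number of such conflicts can be as large as $\left(\binom{q}{r}-1\right)\cdot\frac{D}{\beta}\cdot p=\Theta\left(D^{\alpha}\right)\to\infty$, so "the uncovered portion shrinks by a polynomial factor per round" does not follow from the stated hypotheses. The greedy sweep fails for the same reason: a single previously chosen clique can, through one shared $X$-edge, kill up to $c(f,e)\le D/\beta\gg D^{1-\alpha}$ candidates of a later edge $e$, so your claim that the kills through $e$ stay strictly below $D^{1-\alpha}$ is not implied by the $X$-degree bound. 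Indeed it is not even clear that a saturating system exists for an arbitrary small-size leftover: a set $S$ of leftover edges needs $|S|\left(\binom{q}{r}-1\right)$ distinct $X$-edges, while the degree hypotheses only force the candidates of $S$ to span on the order of $|S|\left(\binom{q}{r}-1\right)\beta D^{-\alpha}$ of them.

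The missing ingredient is exactly the one the paper flags in its discussion after Corollary~\ref{cor:NibbleReservesSpecific}: run a nibble whose leftover has small \emph{maximum degree} (Alon--Yuster), not merely small size, and then complete in one shot by choosing for each $e\in L$ a uniformly random reserve clique and applying the Lov\'asz Local Lemma, using the local sparsity of $L$ (together with the degree/codegree structure of the reserve cliques) to bound, for each candidate, the number of conflicting candidates of other leftover edges by $o(D^{1-\alpha})$. With that control the LLL finishes; without it, neither your iterated nibble on the auxiliary hypergraph nor your final sweep is supported, so the fix is to strengthen the output of stage 1 rather than to add bookkeeping inside stage 2.
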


Once more, we only need the above theorem when $D$ is some linear proportion. Hence setting $D:=p \binom{n}{q-r}$ for some fixed constant $p\in (0,1)$ (and fixing $\beta$ small enough) yields the following corollary.

\begin{corollary}\label{cor:NibbleReservesSpecific}
$\forall~q > r\ge 1,~\rho \in (0,1]$, $\exists~\alpha\in (0,1)$ s.t.~for all $n$ large enough: Let $G$ and $\mathcal{H}$ be as in Corollary~\ref{cor:NibbleDesignsSpecific}. If there exists $X\subseteq K_n^r\setminus G$ such that every $e\in G$ is in at least $n^{q-r-\alpha}$ $K_q^r$-cliques of $X\cup \{e\}$, then there exists a $K_q^r$-packing of $G\cup X$ that cover all edges of $G$. 
\end{corollary}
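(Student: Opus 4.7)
The plan is to deduce the corollary as a direct specialization of Theorem~\ref{thm:NibbleReservesDesigns}, mirroring the way Corollary~\ref{cor:NibbleDesignsSpecific} is obtained from Theorem~\ref{thm:NibbleDesigns}. I set $D := \rho \binom{n}{q-r}$, which tends to infinity with $n$, and must (i) inherit the hypotheses on $G$ and $\mathcal{H}$ already verified in Corollary~\ref{cor:NibbleDesignsSpecific}, and (ii) construct the additional reserve hypergraph $\mathcal{H}'$ that Theorem~\ref{thm:NibbleReservesDesigns} demands.

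First I fix $\beta := \min\{\rho,\, 1/(3(q-r))\}$ and let $\alpha_0 \in (0,1)$ be the output of Theorem~\ref{thm:NibbleReservesDesigns} applied with this $\beta$. The bound $(q-r)\beta \le 1/3$ ensures (exactly as in the deduction of Corollary~\ref{cor:NibbleDesignsSpecific}) that the irregularity factor $(1 \pm n^{-1/3})$ is absorbed into $(1 \pm D^{-\beta})$, and that the automatic pair bound $\binom{n}{q-r-1} \le n^{q-r-1/2} \le D^{1-\beta}$ holds for $n$ large. The bound $\beta \le \rho$ will handle the maximum-degree requirement on $\mathcal{H}'$ below. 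I then set the corollary's output constant to $\alpha := \tfrac{1}{2}(q-r)\alpha_0$ (any positive constant at most $(q-r)\alpha_0$ would serve).

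Next I take $\mathcal{H}'$ to be the collection of all $K_q^r$-cliques in $G \cup X$ that meet $E(G)$ in exactly one edge; equivalently, since $X$ is disjoint from $G$, for each $e \in G$ every $K_q^r$-clique in $X \cup \{e\}$ containing $e$ belongs to $\mathcal{H}'$. By construction $|H \cap E(G)| = 1$ for every $H \in \mathcal{H}'$. By hypothesis each $e \in G$ lies in at least $n^{q-r-\alpha}$ elements of $\mathcal{H}'$, and since $\rho \le 1$ we have $D^{1-\alpha_0} \le n^{(q-r)(1-\alpha_0)} = n^{q-r-(q-r)\alpha_0} \le n^{q-r-\alpha}$ for $n$ large, as required. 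For $e \in X$, the number of $K_q^r$-cliques of $K_n^r$ containing $e$ is at most $\binom{n-r}{q-r} \le \binom{n}{q-r} = D/\rho \le D/\beta$, which is precisely what the theorem demands.

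With every hypothesis of Theorem~\ref{thm:NibbleReservesDesigns} verified, the theorem produces the desired $K_q^r$-packing of $G \cup X$ covering every edge of $G$. The argument is essentially a dictionary between the polynomial-in-$n$ slack of the corollary's statement and the $D$-style slack of the theorem; the only point that requires care is pinning down the inequality $\alpha \le (q-r)\alpha_0$ that translates the two formulations, but I do not anticipate any genuine obstacle here.
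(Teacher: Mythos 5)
Your proposal is correct and follows essentially the same route as the paper, which obtains the corollary by specializing Theorem~\ref{thm:NibbleReservesDesigns} with $D:=\rho\binom{n}{q-r}$ and $\beta$ fixed small enough, taking the reserve family $\mathcal{H}'$ to be the cliques meeting $G$ in one edge. The only cosmetic point is that $\tfrac12(q-r)\alpha_0$ may exceed $1$, so one should take, say, $\alpha:=\min\bigl\{\tfrac12,\,\tfrac12(q-r)\alpha_0\bigr\}$ to keep $\alpha\in(0,1)$, which your parenthetical remark already accommodates.
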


As hinted, Corollary~\ref{cor:NibbleReservesSpecific} does not need the full generality of nibble with reserves. Rather it can more directly be deduced by proving that the leftover provided for designs by the standard nibble has small maximum degree (as shown by Alon and Yuster~\cite{AY05}) and then using the Lov\'asz Local Lemma and reserve cliques to extend the packing to cover all edges of $G$. However, this idea can also be generalized to prove the full nibble with reserves. 

\subsection{Proof of Existence.} The proof of Existence for the $\lambda=1$ case (the hardest case) now essentially follows immediately by combining Lemma~\ref{lem:RandomX} (reserves), Theorem~\ref{thm:EfficientOmniAbsorber} (efficient omni-absorbers), Lemma~\ref{lem:RegBoost} (regularity boosting), and Corollary~\ref{cor:NibbleReservesSpecific} (nibble with reserves for designs) as in the proof outline. The $\lambda\ge 1$ case similarly follows with a few additional minor modifications (mostly involving that the host graph is a multigraph).

\section{Proof of the Refined Efficient Omni-Absorber Theorem.}

\subsection{Short Construction of Absorbers.}

In this subsection, we present the construction of $K_q^r$-absorbers from~\cite{DKP24}. However, we present a different interpretation of the construction that might be conceptually somewhat simpler.

\emph{How do we build a $K_q^r$-absorber?} Naturally, it is constructive to consider an easier problem. First let us not concern ourselves with whether $V(L)$ is independent in $A$; second, and more crucially, let us pass to the multi-hypergraph setting. Here we could even add isolated vertices to $L$ so that $V(L)=V(A)$ in the end.
\vskip.1in
\noindent \emph{What then is such a ``multi-absorber"?} We have a multigraph $A$ on vertex set $V(L)$ with a $K_q^r$-decomposition $\mathcal{Q}_1$ of $L\cup A$ and a $K_q^r$-decomposition $\mathcal{Q}_2$ of $A$. It turns out this is equivalent to the old and crucial notion of an \emph{integral $K_q^r$-decomposition} of $L$ defined as follows (one views the positive cliques of the integral decomposition as the decomposition of $L\cup A$ and the negative cliques as the decomposition of $A$). 

\begin{definition}[Integral Decomposition]
An \emph{integral $K_q^r$-decomposition} $\Phi$ of an $r$-graph $L$ is an assignment of integers to the copies of $K_q^r$ in $K_{V(L)}^r$ such that $\sum_{Q\ni e} \Phi(Q) = +1$ for all $e\in L$ and $\sum_{Q\ni e} \Phi(e) = 0$ for all $e\in K_{V(G)}^r\setminus L$.
\end{definition}

Graver and Jurkat~\cite{GJ73} and independently Wilson~\cite{W73} proved the following. 

\begin{theorem}[Graver and Jurkat~\cite{GJ73}, Wilson~\cite{W73}]\label{thm:Integral}
Let $q > r\ge 1$ be integers. If $L$ is a $K_q^r$-divisible $r$-graph with $v(L)\ge q+r$, then there exists an integral $K_q^r$-decomposition of $L$.
\end{theorem}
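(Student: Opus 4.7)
The plan is to prove, by induction on $r$, that for every $n \geq q + r$ the lattice $\Lambda_q^r(n) \subseteq \mathbb{Z}^{\binom{[n]}{r}}$ generated by the characteristic vectors of $K_q^r$-cliques in $K_n^r$ equals the divisibility lattice $D_q^r(n)$ defined by the congruences $\binom{q-i}{r-i} \mid |\{e \in L : S \subseteq e\}|$ for all $0 \leq i \leq r-1$ and all $i$-subsets $S$. Since $L \in D_q^r(v(L))$ by hypothesis, this yields the theorem. The inclusion $\Lambda_q^r(n) \subseteq D_q^r(n)$ is immediate because each $K_q^r$-clique satisfies the divisibility conditions and these are preserved under $\mathbb{Z}$-combinations, so all the content lies in the reverse inclusion.

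For the base case $r = 1$ the only divisibility condition is $q \mid |L|$; the elementary transposition trades $\chi(\{u\} \cup S) - \chi(\{v\} \cup S) = u - v$ (with $S$ any $(q-1)$-set disjoint from $\{u,v\}$, available since $n \geq q + 1$) lie in $\Lambda_q^1(n)$, so one can consolidate all the mass of $L$ onto a single vertex $v_0$ modulo $\Lambda_q^1(n)$, reducing the claim to $q \cdot v_0 \in \Lambda_q^1(n)$, which follows by taking any clique $Q \ni v_0$ and subtracting the $q-1$ transposition trades that send its other vertices to $v_0$. For the inductive step $r \geq 2$, I would set up a nested induction on $n$ (for the fixed $r$) and carry out a \emph{vertex-peeling} reduction. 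Fix $v \in V(L)$ and form its link $L_v$: a routine translation of the divisibility conditions shows $L_v$ is $K_{q-1}^{r-1}$-divisible on $n - 1 \geq (q-1) + (r-1)$ vertices, so the outer hypothesis gives an integral $K_{q-1}^{r-1}$-decomposition $\Phi_v$ of $L_v$. Lift $\Phi_v$ to an integer assignment $\Psi$ on $K_q^r$-cliques in $K_n^r$ by $\Psi(Q) = \Phi_v(Q \setminus \{v\})$ when $v \in Q$ and $\Psi(Q) = 0$ otherwise; a direct check shows the formal $r$-graph $L'$ represented by $\Psi$ agrees with $L$ on every edge containing $v$, so $L'' := L - L'$ is $K_q^r$-divisible with no edge incident to $v$ and thus effectively lives on $n - 1$ vertices. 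Recursively peeling off vertices in this way reduces $n$ by one at each step while keeping $r$ fixed, so the inner induction closes provided its base case $n = q + r$ is handled.

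The main obstacle is exactly that boundary: when $n = q + r$ one can no longer sacrifice a vertex without leaving the hypothesis of the theorem. I would handle this by a direct argument: at that point the claim reduces to a finite $\mathbb{Z}$-linear statement about the inclusion matrix $W_{r,q} : \mathbb{Z}^{\binom{[q+r]}{q}} \to \mathbb{Z}^{\binom{[q+r]}{r}}$, namely that every $K_q^r$-divisible $r$-graph on exactly $q + r$ vertices lies in the image of $W_{r,q}$. This is essentially the assertion that the invariant factors of the Smith normal form of $W_{r,q}$ are exactly the binomial coefficients $\binom{q-i}{r-i}$; it is the technical heart of the Graver--Jurkat/Wilson theorem and the principal place where the hypothesis $v(L) \geq q + r$ is used in an essential way.
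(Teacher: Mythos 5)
Your reduction scaffolding is sound: stating the result at the lattice level (clique lattice $\Lambda_q^r(n)$ versus divisibility lattice $D_q^r(n)$) is the right move, the $r=1$ base case via transposition trades is correct, the link $L_v$ is indeed $K_{q-1}^{r-1}$-divisible (take $i=j+1$, $S=S'\cup\{v\}$ in the divisibility conditions), and the lift $\Psi$ followed by subtraction does confine the leftover $L''$ to $V(L)\setminus\{v\}$ while keeping it in the divisibility lattice, so the double induction closes \emph{except} at $n=q+r$. But that exception is a genuine gap, not a boundary technicality: you reduce the theorem to the assertion that every vector of $D_q^r(q+r)$ lies in the integral column span of the inclusion matrix $W_{r,q}$, and then you simply name this as ``the technical heart of the Graver--Jurkat/Wilson theorem'' rather than proving it. That is essentially assuming the result you set out to prove, since the standard proofs (Graver--Jurkat's direct induction, Wilson's diagonal form of $W_{r,q}$) carry their full weight precisely in establishing this span statement; the peeling reduction you perform is a known and comparatively easy preamble to it. Moreover, even granting Wilson's computation of the invariant factors $\binom{q-i}{r-i}$ (with their multiplicities), equality $\Lambda_q^r=D_q^r$ does not follow from the invariant factors alone: you would still need to show that the index of the congruence lattice $D_q^r$ in $\mathbb{Z}^{\binom{n}{r}}$ matches the determinant of the Smith form, and the defining congruences of $D_q^r$ are not independent, so this index computation is itself a nontrivial step you have not supplied.

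For context, the paper does not prove this theorem either; it is quoted as a classical result of Graver--Jurkat and Wilson and used as an input to the short absorber construction, so there is no in-paper argument to compare routes with. As a standalone proof, yours is an honest reduction to the hard core plus a citation of that core; to make it complete you would need to actually establish the $n=q+r$ case, e.g., by reproducing Wilson's diagonal form of $W_{r,q}$ together with the index comparison for $D_q^r$, or by following Graver--Jurkat's elementary inductive construction of integral decompositions directly.
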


\noindent \emph{How then do we convert a multi-absorber to an absorber?} We could define $A$ as the edges (including multiples) of the negative cliques (equivalently $A\cup L$ as the positive cliques). If $A$ had no multiple edges and $V(L)$ was independent in $A$, then $A$ would be an absorber as desired. Otherwise, we could replace each edge of $A$ with a `fake edge' (a gadget with the same divisibility properties as an edge), this would remove multi-edges and yield the independence property (if embedded into a larger set of vertices). However, the decompositions would now be into ``fake-ish'' cliques (i.e. cliques where some edges are replaced with fake edges). If we had a construction of absorbers for ``fake-ish'' cliques, we could then embed these to finish.  

A similar but ultimately simpler alternative is to transpose to a `negative' setting. Namely, we instead add an \emph{anti-edge} $e^-$ (i.e.~a clique minus the edge $e$, also denoted $\textrm{AntiEdge}_q^r(e)$) for each edge $e\in L\cup A$ and let $A^* = \bigcup_{e\in L\cup A} e^-$. Then the $K_q^r$-decomposition of $L\cup A$ (the positive one) corresponds to an \emph{anti-clique} (i.e.~anti-edges for all edges of a clique) decomposition of $A^*$; similarly the $K_q^r$-decomposition of $A$ (the negative one) corresponds to an anti-clique decomposition of $\bigcup_{e\in A} e^-$ while $L\cup \bigcup_{e\in L} e^-$ naturally decomposes into $K_q^r$-cliques $(e\cup e^-: e\in L)$. Thus if we could just build a $K_q^r$-absorber for an anti-clique, then by attaching absorbers to all the anti-cliques above would yield the desired $K_q^r$-absorber for $L$. [Note this is simpler than fake edge approach as we to only need to find an absorber for one graph -- the anti-clique.]
\vskip.1in
\noindent \emph{How then do we build an absorber for an anti-clique?} First we construct a \emph{booster} $B$. This is done via a special matrix construction using Cauchy matrices. Fixing a root clique $S$ of $B$, we then layer copies of $B$ to create an \emph{orthogonal} booster $B_o$ (where orthogonal means that every $e\in S$ is in a different clique $Q_e$ of $(B_{o})_{\rm on}$) -- since if attached appropriately each new layer can separate two edges that were in the same clique previously. Finally, we attach copies $B_e$ of $B_o$ to each $Q_e$ with $e\in S$. Then the cliques $Q_e'$ of $(B_e)_{\rm on}$ containing $e$ are internally vertex-disjoint anti-edges so that $\bigcup_{e\in S} Q_e'$ forms an anti-clique; lastly, one checks that said anti-clique is independent among the remaining edges and they yield the desired absorber decompositions.

\begin{remark}
On one final note, for using the general embedding lemma presented later, it is useful if a $K_q^r$-absorber has the additional property of being \emph{edge-intersecting} - that is, for every edge $e\in A$, there exists $f\in L$ such that $e\cap V(L)\subseteq f$. We note that the existence of edge-intersecting absorbers follows from the construction above provided the initial edge-decomposition (i.e.~multi-absorber) is also edge-intersecting [since boosters are edge-intersecting]. To prove the existence of an edge-intersecting multi-absorber, one first pushes $L$ one uniformity at a time onto a new set of vertices (in an edge-intersecting way) and only then finds a multi-absorber. We omit the details. Indeed, this additional property greatly simplifies what must be tracked in the refine down procedure - removing the tracking of ``refinement degree'' in~\cite{DPI} and thus simplifying the whole proof a good deal. 
\end{remark}

\subsection{Refiners and Multigraphs.} Now we move on towards presenting the ideas in the proof of Theorem~\ref{thm:RefinedEfficientOmniAbsorber}. \emph{How do we build a $C$-refined efficient omni-absorber?} The first key realization (as in the absorber construction above) is that we may reduce the problem of finding a $C$-refined efficient omni-absorber to its multigraph version wherein we allow parallel edges. Note that we still must ensure said ``multi-omni-absorber'' is $C$-refined and efficient in its maximum degree (where parallel edges count with multiplicity towards degree). The reduced problem then is not much simpler but at least we will not have to worry about embedding substructures edge-disjointly since we allow multiple edges.
\vskip.1in

\noindent \emph{How to reduce to a multi-omni-absorber?} For our proof of the existence of $K_q^r$-absorbers, we embedded anti-edges and then used anti-clique absorbers. Now that we have access to absorbers for all $K_q^r$-divisible $L$, we no longer have to transfer to the `negative realm' but rather can replace multiple edges with `fake-edges' (an anti-anti-edge). Namely a fake-edge,which could be written succinctly as $\bigcup_{f\in e^-} f^{-}$, is formally defined as follows. 

\begin{definition}[Fake-Edge]\label{def:FakeEdge}
Let $q>r\ge 1$. Let $f$ be a set of vertices of size $r$. A \emph{fake-edge on $f$}, denoted ${\rm FakeEdge}_q^r(f)$, is a set of new vertices $x_1,\ldots, x_{q-r}$ together with a set of anti-edges $\{ {\rm AntiEdge}_q^r(T): T\in \binom{f\cup \{x_i:i\in[q-r]\} }{r} \setminus \{f\} \}$.
\end{definition}

We note that a fake-edge has the same divisibility properties as an actual edge. Thus we will replace each multi-edge in our multi-omni-absorber by embedding a fake-edge in its stead; then we will embed a `private' (read: edge-disjoint) absorber for each ``fake-ish'' clique in the ensuing decomposition family. Now we have to ensure that the fake-edges (and similarly that private absorbers) are edge-disjoint and that combined they do not have too large a maximum degree. This will be accomplished by means of an embedding lemma; as we will also require embedding lemmas in our applications, we present a very general embedding lemma, Lemma~\ref{lem:EmbedMinDegree}, in the next subsection. This embedding lemma is of its own nature with a different flavor than the arguments to find a refined efficient multi-omni-absorber. This combination of ideas allows us to quarantine all embedding issues to one final embedding step (instead of having to consider embedding issues at each step of Refine Down). Since we now will embed `private' (i.e.~edge-disjoint) absorbers for all the fake cliques in our decomposition, we now ask the following potent question.

\vskip.1in

\noindent \emph{Why not reduce to decomposing into small divisible graphs instead of decomposing into cliques?} Now that we have proved the existence of $K_q^r$-absorbers for all $L$, we can embed such private absorbers for all $L$ not just anti-cliques or fake-cliques. This motivates the following definition.

\begin{definition}[Refiner]
Let $q > r\ge 1$ be integers. Let $X$ be an $r$-graph. We say an $r$-graph $R$ is a \emph{$K_q^r$-refiner} of $X$, if $V(X)=V(R)$, $X$ and $R$ are edge-disjoint, and for every $K_q^r$-divisible $L\subseteq X$, there exists a decomposition of $R\cup L$ into $K_q^r$-divisible subgraphs. 
\end{definition}

Of course we should take care that such $L$ are small, so that there will be room to embed their absorbers. [Indeed the existence of refiners is trivial if we allow unbounded subgraphs since then the empty graph is a refiner!] Thus we amend the definition of $C$-refined to also include this assumption. Now it suffices via embedding fake-edges and absorbers (via the general embedding lemma) to prove Theorem~\ref{thm:RefinedEfficientOmniAbsorber} only for a $C$-refined refiner and even allowing multi-edges. There is still much work to be done but this is now the state.

\subsection{An Embedding Lemma for Supergraph Systems.}

In this subsection, we state a general embedding lemma which can be used to embed fake edges and (private) absorbers as required above. Indeed, both steps follow from a general embedding lemma about embedding rooted graphs $\mathcal{W}=(W_H:H\in \mathcal{H})$ onto a $C$-refined family $\mathcal{H}$ of subgraphs of an $r$-graph $R$ of small maximum degree in a manner that is edge-disjoint and has low maximum degree. For `fake edges', the family $\mathcal{H}$ will simply consist of each multi-edge $e$ to be replaced and we will set $W_e := {\rm FakeEdge}_q^r(e)$. For `private absorbers', the family $\mathcal{H}$ will be the decomposition family of $R$ where $R$ is a $K_q^r$-refiner and $W_H$ will be a $K_q^r$-absorber of $H$.

Realizing that these two embedding problems have the same nature and follow from a general embedding lemma would already be enough motivation to codify such a lemma. But in fact, the general embedding lemma and its variants feature prominently in the method's applications. Indeed for applications to problems in extremal and probabilistic design theory, including high minimum degree, high girth, and thresholds to be discussed later in this article, the crucial difficulty of these problems given our black-box omni-absorber theorem will be the associated embedding problem. Namely, how do we embed absorbers into high minimum degree host graphs, or how do we embed boosters for our omni-absorber in a high-girth or spread way? 

Given all these applications, we believe it to be imperative to codify this embedding problem in a general fashion. Here we only mention a very general form of the embedding lemma, though one of its known proof via the Lov\'asz Local Lemma can be extended to even more general forms (the most general statement would essentially be ``whatever the Local Lemma gives''). That said, we hope the required conditions of this version can be understood well enough for further applications.  
Let us then start to define this general embedding problem. In all our applications, we will have a (sparse) hypergraph $R$ to which we want to embed graphs in a rooted way to subgraphs of $R$, often with the additional restriction of embedding said graphs inside some host graph $G$. This motivates the following definitions. First recall that a hypergraph $W$ is a \emph{supergraph} of a hypergraph $J$ if $V(J)\subseteq V(W)$ and $E(J)\subseteq E(W)$.

\begin{definition}[Supergraph Systems]
Let $\mathcal{H}$ be a family of subgraphs of a hypergraph $J$. A \emph{supergraph system} $\mathcal{W}$ for $\mathcal{H}$ is a family $(W_H : H\in \mathcal{H})$ where for each $H\in \mathcal{H}$, $W_H$ is a supergraph of $H$ with $(V(W_H)\setminus V(H))\cap V(J)=\emptyset$ and for all $H'\ne H\in \mathcal{H}$, we have $V(W_H)\cap V(W_{H'}) \setminus V(J)= \emptyset$. We let $\bigcup \mathcal{W}$ denote $\bigcup_{H\in \mathcal{H}} W_H$ for brevity. We say $\mathcal{W}$ is \emph{edge-intersecting} if $W_H$ is $J$-edge-intersecting for every $H\in \mathcal{H}$. For a real $C \geq 1$, we say that $\mathcal{W}$ is \emph{$C$-bounded} if $\max\{e(W_H),~v(W_H)\}\le C$ for all $H\in \mathcal{H}$. 
\end{definition}

All of the problems above can then be thought of as embedding supergraph systems appropriately. For our embedding lemma, it proves useful if the supergraphs are edge-intersecting (indeed, this is why we proved the existence of edge-intersecting absorbers). We now formalize the notion of embedding a supergraph system into a host graph $G$ as follows.

\begin{definition}[Embedding a Supergraph System]
Let $J$ be a hypergraph and let $\mathcal{H}$ be a family of subgraphs of $J$. Let $\mathcal{W}$ be a supergraph system for $\mathcal{H}$. Let $G$ be a supergraph of $J$. An \emph{embedding} of $\mathcal{W}$ \emph{into} $G$ is a map $\phi : V(\bigcup \mathcal{W}) \hookrightarrow V(G)$ preserving edges such that $\phi(v)=v$ for all $v\in V(J)$. We let $\phi(\mathcal{W})$ denote $\bigcup_{e\in \bigcup W} \phi(e)$ (i.e.~the subgraph of $G$ corresponding to $\bigcup \mathcal{W}$).
\end{definition}

We are now prepared to state the general embedding lemma (see~\cite{DHLP25} for a proof). 

\begin{lemma}[General Embedding]\label{lem:EmbedGeneral}
$\forall~C > r \ge 1,~\gamma\in (0,1]~\exists~C'\ge 1$: Let $G\subseteq K_n^r$ and let $J$ be a multi-$r$-graph with ${\rm Simple}(J)\subseteq G$ and $\Delta(J)\le \frac{n}{C'}$. Suppose that $\mathcal{H}$ is a $C$-refined family of subgraphs of $J$ and $\mathcal{W}$ is a $C$-bounded edge-intersecting supergraph system for $\mathcal{H}$. If for each $H\in \mathcal{H}$ there exist at least $\gamma\cdot n^{|V(W_H)\setminus V(H)|}$ embeddings of $W_H$ into $G\setminus (E(J)\setminus E(H))$,
then there exists an embedding $\phi$ of $\mathcal{W}$ into $G$ such that $\Delta_{r-1}(\phi(\mathcal{W})) \le C'\cdot \Delta_{r-1}(J)$.
\end{lemma}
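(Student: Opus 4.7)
My plan is to apply the Lov\'asz Local Lemma to the natural random embedding. For each $H\in \mathcal{H}$, independently select $\phi_H$ uniformly from the at least $\gamma\cdot n^{|V(W_H)\setminus V(H)|}$ embeddings of $W_H$ into $G\setminus (E(J)\setminus E(H))$ guaranteed by hypothesis, and set $\phi:= \bigsqcup_H \phi_H$. By construction $\phi$ fixes $V(J)$ pointwise, respects all edges of each $W_H$, and places no new edge onto a forbidden edge of $J$. What remains is to ensure (i) distinct ``new'' vertices receive distinct images (so $\phi$ is a genuine embedding), and (ii) the codegree bound $\Delta_{r-1}(\phi(\mathcal{W}))\le C'\cdot \Delta_{r-1}(J)$.

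I would introduce two families of bad events. First, for each pair $H\ne H'\in \mathcal{H}$ and each pair of new vertices $v\in V(W_H)\setminus V(H)$, $v'\in V(W_{H'})\setminus V(H')$ (and analogously for collisions of a new vertex with $V(J)$), a \emph{collision event} $\{\phi_H(v)=\phi_{H'}(v')\}$. Uniformity and the lower bound on embeddings give probability $O_{C,\gamma}(1/n)$ per collision. Second, for each $(r-1)$-set $S\subseteq V(G)$, a \emph{codegree event} $B_S:=\{\deg_S(\phi(\mathcal{W}))> C'\cdot \Delta_{r-1}(J)\}$. Bounding $\Pr[B_S]$ is the core analytic step: the \emph{edge-intersecting} property means that any edge of $\phi(W_H)$ containing $S$ must have $S\cap V(J)$ contained in an edge of $H\subseteq J$, so the set of $H\in \mathcal{H}$ that can ever contribute an edge through $S$ is controlled by a sum over edges of $J$ meeting $S$, itself bounded via $\Delta(J)\le n/C'$, $C$-boundedness, and $C$-refinedness. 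For each such $H$, the chance that the random $\phi_H$ actually places an edge through $S$ is $O(1/n)$ since each free vertex is placed roughly uniformly among $\Theta(n)$ targets. Summing yields $\mathbb{E}[\deg_S(\phi(\mathcal{W}))-\deg_S(J)]=O_{C,\gamma}(\Delta_{r-1}(J))$ for a suitable choice of $C'$, and a bounded-differences inequality (with Lipschitz constant controlled by $C$-boundedness) gives $\Pr[B_S]\le n^{-\omega(1)}$.

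Finally, each bad event depends only on the $\phi_H$ for $H$'s ``near'' the vertices defining it, a set of size polynomial in $n$ by $\Delta(J)\le n/C'$ and $C$-refinedness. Since all failure probabilities are superpolynomially small while dependencies are only polynomial, the asymmetric Lov\'asz Local Lemma produces a realization avoiding every bad event, which is exactly the required embedding. The main obstacle is the codegree estimate for $B_S$: one must use edge-intersectingness in precisely the right way to argue that only $O_C(\Delta_{r-1}(J))$ members of $\mathcal{H}$ can possibly contribute a new edge through $S$, converting a naive bound (whose size would scale with $|\mathcal{H}|$) into one linear in $\Delta_{r-1}(J)$. A secondary subtlety is that Chernoff-type concentration only directly yields the desired failure probability when $\Delta_{r-1}(J)$ exceeds a polylogarithmic floor; the regime where $\Delta_{r-1}(J)$ is smaller is handled by a separate, easier union-bound argument, or equivalently by absorbing a polylogarithmic additive error into the constant $C'$.
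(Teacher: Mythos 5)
The paper itself does not prove this lemma (it defers to \cite{DHLP25}, noting only that the known proof goes via the Lov\'asz Local Lemma applied to independent random embeddings), so your overall route --- choose each $\phi_H$ uniformly and independently, then use the LLL plus concentration for the codegree --- is the intended one, and your expectation bound for $\deg_S(\phi(\mathcal{W}))$ via edge-intersectingness, $C$-refinedness and $\Delta(J)\le n/C'$ is the right core calculation. The genuine gap is in your treatment of collisions. In the intended applications (a fake edge for every multi-edge, a private absorber for every member of a decomposition family) one has $|\mathcal{H}|=\Theta_C(e(J))$, which for $r\ge 2$ far exceeds $n$; since each $W_H$ has at least one new vertex, the new vertices outnumber $|V(G)|$, so no map giving pairwise distinct images to new vertices of \emph{different} supergraphs exists, and correspondingly the expected number of your vertex-collision events involving a single fixed $\phi_H$ is $\Theta(|\mathcal{H}|/n)\gg 1$ --- no version of the LLL can avoid them all. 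What is actually needed (and what the applications use) is injectivity on each $W_H$ separately together with \emph{edge}-disjointness of the images of the $W_H\setminus H$, so the bad events must be collisions of image edges, not image vertices. For those events the probabilities are still only $\Theta(1/n)$ while each $\phi_H$ lies in $\Theta(|\mathcal{H}|)$ of them, so your closing justification (``failure probabilities superpolynomially small, dependencies polynomial'') is not the mechanism that closes the LLL; one must verify the asymmetric condition by showing the \emph{sum} of probabilities of the events meeting a fixed $\phi_H$ is $O_{C,\gamma}(1/C')$, and that computation --- which again runs through edge-intersectingness, $C$-refinedness and $\Delta_{r-1}(J)\le n/C'$, mirroring your codegree expectation bound --- is entirely missing from the proposal.

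Two smaller issues. The codegree event $B_S$ depends on \emph{every} $\phi_H$ (any supergraph can be embedded through $S$), so it is not local in the dependency graph as you claim; it should either be excluded from the LLL and handled by a union bound over the $\binom{n}{r-1}$ sets $S$ (legitimate exactly when its probability is superpolynomially small) or be given appropriate weights in the general LLL. For the tail bound, bounded differences over the $|\mathcal{H}|\gg\Delta_{r-1}(J)$ variables is too weak ($\exp(-t^2/C^2|\mathcal{H}|)$); since $\deg_S(\phi(\mathcal{W}))$ is a sum of independent $[0,C]$-bounded contributions, Chernoff--Bernstein gives $\exp(-\Omega(\Delta_{r-1}(J)/C))$, which suffices only when $\Delta_{r-1}(J)\gtrsim\log n$. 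In the complementary regime your proposed fixes do not work as stated: the union bound is precisely what is unavailable there, and a polylogarithmic \emph{additive} error cannot be absorbed into $C'$ because the conclusion is multiplicative in $\Delta_{r-1}(J)$.
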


The conditions in the lemma above are quite natural. Of course, we need $J$ to have somewhat small maximum degree if we hope to have space to embed the supergraph system. For similar reasons, we want the supergraphs to be $C$-bounded in order to fit them. The $C$-refined and edge-intersecting properties meanwhile also serve to not overburden our embedding choices. The $C$-refined property is useful since otherwise too many supergraphs might need to be embedded coming off the same edge. Similarly, without the edge-intersecting property we would have to worry for $r\ge 3$ about an $(r-1)$-set being in too many supergraphs of the system (and if each requires an edge containing said set this is indeed an issue).

We note that Lemma~\ref{lem:EmbedGeneral} has the following corollary when $G$ is a high minimum degree hypergraph.

\begin{lemma}[Embedding]\label{lem:EmbedMinDegree}
$\forall~C > r \ge 1,~\exists~C'\ge 1,~\varepsilon\in (0,1)$: Let $G\subseteq K_n^r$ with $\delta(G)\ge (1-\varepsilon)n$. Let $J$ be a multi-$r$-graph with ${\rm Simple}(J)\subseteq G$ and $\Delta(J)\le \frac{n}{C'}$. If $\mathcal{H}$ is a $C$-refined family of subgraphs of $J$ and $\mathcal{W}$ is a $C$-bounded edge-intersecting supergraph system for $\mathcal{H}$, then there exists an embedding $\phi$ of $\mathcal{W}$ into $G$ such that $\Delta_{r-1}(\phi(\mathcal{W})) \le C'\cdot \Delta_{r-1}(J)$.
\end{lemma}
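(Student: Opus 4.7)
The plan is to derive Lemma~\ref{lem:EmbedMinDegree} as a direct corollary of Lemma~\ref{lem:EmbedGeneral}, for which it suffices to verify the embedding-count hypothesis of the General Embedding Lemma in the high minimum (co)degree regime. Fix $\gamma := 2^{-(C+1)}$, let $C_0 = C_0(C,r,\gamma)$ be the constant produced by Lemma~\ref{lem:EmbedGeneral} applied with this $\gamma$, and take $C' := \max\{C_0, 8C\}$ and $\varepsilon := 1/(8C)$. Given $G, J, \mathcal{H}, \mathcal{W}$ satisfying the hypotheses of Lemma~\ref{lem:EmbedMinDegree} with these parameters, the goal is to show that for every $H \in \mathcal{H}$ there are at least $\gamma \cdot n^{|V(W_H)\setminus V(H)|}$ embeddings of $W_H$ into $G\setminus (E(J)\setminus E(H))$; Lemma~\ref{lem:EmbedGeneral} then produces the required embedding with $\Delta_{r-1}(\phi(\mathcal{W})) \le C'\cdot \Delta_{r-1}(J)$, completing the proof.

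To count embeddings, I would fix $H\in \mathcal{H}$, enumerate the ``new'' vertices $V(W_H)\setminus V(H)$ as $v_1,\ldots,v_k$ (where $k\le C$ by $C$-boundedness), and place them one at a time while extending the identity on $V(H)$. Given a partial placement $\phi(v_1),\ldots,\phi(v_{i-1})$, a choice $\phi(v_i) = x$ is forbidden if $x$ has already been used, $x \in V(H)$, or there exists an edge $e\in W_H$ containing $v_i$ and otherwise supported on already-placed vertices for which $\phi(e) \notin E(G)$ or $\phi(e) \in E(J)\setminus E(H)$. The first two categories contribute at most $v(H)+(i-1) \le 2C$ forbidden vertices. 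For each of the at most $e(W_H) \le C$ admissible edges, writing $S := \phi(e\setminus \{v_i\})$, the minimum codegree assumption $\delta(G) \ge (1-\varepsilon)n$ gives at most $\varepsilon n$ choices of $x$ with $S\cup\{x\}\notin E(G)$, while $\Delta(J) \le n/C'$ implies $d_J(S) \le n/C'$ (every edge through $S$ goes through any fixed vertex of $S$), so at most $n/C'$ choices of $x$ make $S\cup\{x\} \in E(J)$. Summing yields at most $C(\varepsilon n + n/C') + 2C \le n/2$ forbidden values of $x$ for $n$ large, so $v_i$ admits at least $n/2$ valid placements. Multiplying over the $k$ steps gives at least $(n/2)^k \ge 2^{-C} n^k \ge \gamma \cdot n^k$ embeddings of $W_H$ into $G\setminus (E(J)\setminus E(H))$, as required.

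The main (mild) step is the greedy count above; it is straightforward here precisely because the codegree of $G$ is so close to $n$ that deleting the codegree-$O(n/C')$ multi-graph $E(J)\setminus E(H)$ costs only a constant fraction of extensions per edge constraint. No substantial obstacle arises: the $C$-refined and edge-intersecting hypotheses, which are delicate in the proof of Lemma~\ref{lem:EmbedGeneral} (where they control overlap structure via a Local Lemma argument), are already packaged into that black box and play no role in verifying the embedding count. Once the count is in hand, invoking Lemma~\ref{lem:EmbedGeneral} directly yields the desired embedding $\phi$ with the claimed bound on $\Delta_{r-1}(\phi(\mathcal{W}))$.
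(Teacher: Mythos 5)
Your proposal is correct and follows essentially the same route as the paper: the paper also deduces Lemma~\ref{lem:EmbedMinDegree} from Lemma~\ref{lem:EmbedGeneral} by choosing $\varepsilon$ small enough that each $W_H$ has at least $\gamma\cdot n^{|V(W_H)\setminus V(H)|}$ rooted embeddings into $G\setminus(E(J)\setminus E(H))$. Your greedy vertex-by-vertex count (using the codegree condition and $\Delta(J)\le n/C'$ to bound forbidden choices per step) just makes explicit the counting step the paper leaves implicit.
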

\begin{proof}
This follows from Lemma~\ref{lem:EmbedGeneral} by choosing $\varepsilon$ small enough so that $|\Phi(H)|\ge \gamma \cdot n^{|V(W_H)\setminus V(H)|}$ for all $H\in \mathcal{H}$ for some $\gamma\in (0,1)$.
\end{proof}
 
\subsection{Refine Down.}

To prove Theorem~\ref{thm:RefinedEfficientOmniAbsorber}, one first proves the $1$-uniform case via the following simple yet clever construction: we let $A$ be a set of $q\cdot |X|$ new vertices; we let $H_1$ be a \emph{tight $q$-path} (i.e.~every $r$ consecutive vertices form an edge) on $A$ and we let $H_2$ be a tight $q$-path on $A\cup X$ (with each vertex of $X$ interspersed every $q$ vertices of $A$); then we let $\mathcal{F}_A:= H_1\cup H_2$. Meanwhile for $r\ge 2$, the plan is to reduce down to a constant set of vertices (at which point the theorem trivially holds). To that end, one defines a refiner with \emph{remainder} (which is allowed to contain a `leftover' of undecomposed edges). The key is to prove the Refine Down Lemma which constructs an efficient refined multi-refiner whose remainder lies inside a smaller set of vertices. The Refine Down Lemma is similar in spirit to the Cover Down Lemma and also proceeds by induction on uniformity. That said, instead of needing to cover all edges, we just have to absorb those in the remainder (and are allowed to use multi-edges and to decompose into small divisible graphs instead). Nevertheless, there is a cost to the efficiency of this lemma based on how small this set is (the smaller the set the higher the max degree due to compression factors). Given the cost, we will not be able to apply the Refine Down Lemma to reduce down to a constant sized set without an additional key idea.
\vskip.1in
\noindent \emph{How to overcome this difficulty?} The final ingredient is to apply the $1$-uniform construction to sets of parallel edges so as to leave a remainder with constant multiplicity; this is done in the Multiplicity Reduction Lemma. Then the plan is to use the Refine Down Lemma to refine down to the case where the remainder is essentially complete (i.e. max degree is roughly the number of vertices). At that point, we will alternate steps between the Refine Down Lemma to a vertex set that is some constant factor smaller (which then increases the multiplicity by some large constant factor) and the Multiplicity Reduction Lemma (which then decreases the multiplicity to some fixed constant and hence the max degree as well). We are finished when the set reaches constant size.

\section{Extremal Design Theory: Nash-Williams' Conjecture.}

\subsection{Triangle Decompositions in High Minimum Degree Graphs.} A natural question is when a partial Steiner system (i.e.,~a set of edge-disjoint cliques) can be extended to a Steiner system. More generally, one may wonder whether `nearly' complete graphs admit a $K_q^r$-decomposition provided they are $K_q^r$-divisible. The \emph{decomposition threshold of $K_q^r$}, denoted by $\delta_{K_q^r}$, is defined as $\lim\sup_{n\rightarrow \infty} \delta_{K_q^r}(n)/n$ where $\delta_{K_q^r}(n)$ is the smallest integer $d$ such that every $K_q^r$-divisible graph $G$ on $n$ vertices with $\delta(G)\ge d$ has a $K_q^r$-decomposition. Note one must impose some condition, such as high minimum degree, to ensure that every edge is in at least one clique. The most central and famous conjecture in this direction is the following conjecture of Nash-Williams from 1970~\cite{NW70}.

\begin{conj}[Nash-Williams~\cite{NW70}]\label{conj:NW}
The following holds for sufficiently large $n$: If $G$ is a $K_3$-divisible graph on $n$ vertices with $\delta(G)\ge \frac{3}{4}n$,  then $G$ admits a $K_3$-decomposition.
\end{conj}

In an addendum to Nash-Williams' article~\cite{NW70}, Graham provided a construction showing that the fraction $3/4$ is necessary. Indeed, his construction also provides graphs with minimum degree $\frac{3}{4}n-1$ and no fractional $K_3$-decomposition. Formally, a \emph{fractional $K_q^r$-decomposition} of an $r$-graph $G$ is an assignment of non-negative weights to each copy of $K_q^r$ in $G$ such that the sum of the weights along each edge is exactly $1$. The \emph{fractional $K_q^r$-decomposition threshold} $\delta^*_{K_q^r}$ is defined as the infimum of all real numbers $c$ such that every $r$-graph $G$ with minimum degree at least $c\cdot v(G)$ admits a fractional $K_q^r$-decomposition. Since a $K_3$-decomposition is also a fractional $K_3$-decomposition, the existence of a fractional $K_3$-decomposition is a necessary condition for the existence of a $K_3$-decomposition. Graham's construction thus shows that the fraction $3/4$ would also be tight for the \emph{fractional relaxation} of Nash-Williams' Conjecture (namely $\delta_{K_3}^*\ge 3/4$). 

Nash-Williams' Conjecture may be seen as a foundational problem in what may be termed \emph{extremal design theory}; it is instructive to compare this value to the fractions in the minimum degree thresholds for other extremal problems:~namely the threshold is $1/2$ for the existence of at least one triangle (which follows from a classical theorem usually attributed to Mantel~\cite{mantel} from 1907) and $2/3$ for the existence of a \emph{triangle factor}, that is a set of vertex-disjoint triangles spanning all vertices (as given by the celebrated Corr\'adi--Hajnal Theorem~\cite{CH63}). 

Indeed, one could view these results of Corr\'adi--Hajnal~\cite{CH63}, Hajnal--Szemer\'edi~\cite{HS70}, and their ilk, but also results such as the famous result of R\"odl--Ruci\'nski--Szemer\'edi~\cite{RRS06} on the minimum degree threshold for hypergraph matchings, as the fundamental questions of $1$-uniform extremal design theory (covering all the vertices); from that perspective, one could also view all of Tur\'an theory as $0$-uniform extremal design theory (since covering the empty set with cliques is equivalent to finding one clique). In this way, {\bf Nash-Williams' Conjecture is a central question in extremal design theory} as borne out by its generalizations (to $K_q$ or general uniformities) and reductions to it.

The current best-known results on Nash-Williams' Conjecture are as follows. In 2016, Barber, K\"{u}hn, Lo, and Osthus~\cite{BKLO16} proved the following.

\begin{thm}[Barber, K\"{u}hn, Lo, and Osthus~\cite{BKLO16}]\label{thm:BKLO}
For each real $\varepsilon>0$, every sufficiently large $K_3$-divisible graph~$G$ on~$n$ vertices with 
    $\delta(G)\ge (\max\{\delta^*_{K_3},\frac{3}{4}\}+\varepsilon)n$
    admits a $K_3$-decomposition. 
\end{thm}

Meanwhile, the best-known fractional result of Delcourt and Postle~\cite{DP2021progress} from 2021 is as follows. 

\begin{thm}[Delcourt and Postle~\cite{DP2021progress}]\label{thm:DP2021}
If $G$ is a $K_3$-divisible graph on~$n$ vertices with $\delta(G)\ge (\frac{7+\sqrt{21}}{14})n$, then $G$ 
    admits a fractional $K_3$-decomposition. 
\end{thm}

Combined with the previous theorem this confirms that Nash-Williams' Conjecture holds for graphs with minimum degree at least $0.82733n$. We note that Theorem~\ref{thm:DP2021} improved a series of earlier works on the fractional relaxation by Garaschuk~\cite{garaschuk2014linear} in 2014, Dross~\cite{dross2016fractional} in 2016, and Dukes and Horsley~\cite{dukes_minimum_2020} in 2020. Nash-Williams' Conjecture has become a central open question in design theory as various generalizations are now tied to it (e.g., the minimum degree threshold for an $F$-decomposition for any $3$-chromatic $F$~\cite{GKLMO19}, the minimum degree threshold for approximately packing families of $2$-regular graphs~\cite{CKKO19}, and the minimum degree threshold for high-girth triangle decompositions~\cite{DHLP25}). For more history on Nash-Williams' Conjecture and its generalizations, see the survey by Glock, K\"uhn, and Osthus~\cite{GKO20Survey}.

\subsection{Generalizations of Nash-Williams' Conjecture.} A core area of design theory is to consider analogous questions for decompositions into cliques larger than triangles. The following is a folklore generalization of Nash-Williams' Conjecture for $K_q$-decompositions (dating at least to Gustavsson's 1991 thesis~\cite{gustavsson1991decompositions}), a conjectured high minimum degree version of Wilson's Theorem.

\begin{conj}[Folklore]\label{conj:Folklore}
For each integer $q\ge 4$, the following holds for sufficiently large $n$: If $G$ is a $K_q$-divisible graph on $n$ vertices with minimum degree $\delta (G) \geq \left(1-\frac{1}{q+1}\right)n$,  then $G$ admits a $K_q$-decomposition.
\end{conj}

The above conjecture coincides with Nash-Williams' Conjecture when $q=3$. In 1991, Gustavsson~\cite{gustavsson1991decompositions} in his thesis provided a construction to show that this minimum degree value would be tight (this construction was reiterated by Yuster~\cite{Yuster2005asymptotically} in 2005). Indeed, his construction admits no fractional $K_q$-decomposition and so this value would also be tight for the fractional relaxation of the above conjecture. Again, it is instructive to compare this value to the fractions in the minimum degree thresholds for other extremal problems: namely a value of $1-\frac{1}{q-1}$ for the existence of at least one copy of $K_q$ which follows from the classical Tur\'an's Theorem~\cite{turan1941extremal} from 1941, and $1-\frac{1}{q}$ for the existence of a \emph{$K_q$-factor}, that is a set of vertex-disjoint $K_q$'s spanning all vertices, which follows from the famous Hajnal--Szemer\'edi Theorem~\cite{HS70} from 1970.  

Beyond the pleasing symmetry of Conjecture~\ref{conj:Folklore} with the corresponding extremal results noted above, there was even more evidence in favor of the conjecture in the form of the best-known results. 
In 2019, Glock, K\"{u}hn, Lo, Montgomery, and Osthus~\cite{GKLMO19} proved that if the fractional relaxation of Conjecture~\ref{conj:Folklore} holds for minimum degree $cn$ for some constant $c\ge 1-\frac{1}{q+1}$, then Conjecture~\ref{conj:Folklore} holds for any constant $c' > c$.  Meanwhile, Montgomery~\cite{montgomery_fractional_2019} in 2019 produced the best-known result on the fractional relaxation by obtaining a minimum degree bound of $(1-\frac{1}{100q})n$, notably the first bound whose denominator is linear in $q$. This improved upon a series of earlier works on the fractional relaxation by Yuster~\cite{Yuster2005asymptotically} in 2005, Dukes~\cite{D12} in 2012, and Barber, K\"{u}hn, Lo, Montgomery, and Osthus~\cite{BKLMO17} in 2017. Conjecture~\ref{conj:Folklore} has also become paramount in the realm of extremal design theory as more variations are tied to it (such as the minimum degree threshold for an $F$-decomposition for any $q$-chromatic $F$~\cite{GKLMO19} and the minimum degree threshold for approximately packing certain families of $(q-1)$-regular graphs~\cite{CKKO19}).  

In light of this history, the recent result of Delcourt, Henderson, Lesgourgues, and Postle~\cite{DHLP25b} may then come as a bit of surprise: Conjecture~\ref{conj:Folklore} and even its fractional relaxation are false for all $q\ge 4$ as follows.

\begin{thm}[Delcourt, Henderson, Lesgourgues, and Postle~\cite{DHLP25b}]\label{thm:mainExistence}
$\forall~q\geq 4$, $\exists~c>1$ and infinitely many $K_q$-divisible graphs $G$ with $\delta(G)\ge \big(1-\frac{1}{c\cdot (q+1)}\big)v(G)$ and no fractional $K_q$-decomposition.
\end{thm}

Indeed, the folklore conjecture is not only false but the multiplicative factor is wrong as follows (namely any factor $c$ smaller than $\frac{1+\sqrt{2}}{2}\approx 1.207$ still yields a counterexample provided $q$ is large enough).

\begin{thm}[Delcourt, Henderson, Lesgourgues, and Postle~\cite{DHLP25b}]\label{thm:mainAsymptotic}
    Let $c=\frac{1+\sqrt{2}}{2}$ and fix $\varepsilon>0$. For every large enough integer $q$, there exist infinitely many $K_q$-divisible graphs $G$ with $\delta(G)\ge \big(1-\frac{1}{(c-\varepsilon)\cdot (q+1)}\big)v(G)$ and no fractional $K_q$-decomposition.
\end{thm}

We note that in light of the results above, determining the correct minimum degree threshold for $K_q$-decompositions seems quite a fascinating problem; at the current time, we would not hazard to guess whether the value in Theorem~\ref{thm:mainAsymptotic} is optimal or whether even better constructions may yet be found.

Much less is known regarding hypergraph decompositions. Glock, K\"{u}hn, and Osthus~\cite{GKO20Survey} in their survey on graph decompositions conjectured that $\delta_{K^r_q}=1-\Theta_r(1/q^{r-1})$, where the constant hidden in the $\Theta_r$ notation is allowed to depend on $r$. The best known bound toward this dates from the second proof of Existence in 2016 where Glock, K\"{u}hn, Lo, and Osthus~\cite{GKLO16} proved that $\delta_{K^r_q}\le 1-\Theta_r(1/q^{2r})$. As for the fractional relaxation, the current best known bound is from 2017 by Barber, K\"{u}hn, Lo, Montgomery, and Osthus~\cite{BKLMO17} who proved that $\delta^*_{K_q^r}\leq1-\Theta_r(1/q^{2r-1})$. It would be of interest to improve these bounds. 

\subsection{A Refined Absorption Proof for Nash-Williams' Conjecture.}

We now discuss how to use refined absorption to prove Theorem~\ref{thm:BKLO}. The adaptations actually work to prove the more general version where $G$ is a $K_q$-divisible graph with $\delta(G)\ge \left(\max\{\delta_{K_q}^*, 1-\frac{1}{2q-2}\}+\varepsilon\right)n$ and we desire a $K_q$-decomposition of $G$. One modifies the refined absorption proof framework as follows. For step (1), we `reserve' a random subset $X$ of $E(G)$. Namely, we modify the reserves lemma to hold \emph{inside} $G$ which again follows easily via Chernoff bounds. Notice that lemma below actually works for the better bound of $1-\frac{1}{q+1}+\varepsilon$ in the minimum degree.

\begin{lem}[Reserves for Nash-Williams]\label{lem:RandomXHighMinDeg}
$\forall q \ge 3$ and $\varepsilon\in (0,1)$, $\exists~\gamma \in (0,1)$ s.t.~for all $n$ large enough: Let $G \subseteq K_n$ with $\delta(G)\ge (1-\frac{1}{q+1}+\varepsilon)n$. If $p\in [n^{-\gamma},1)$, then there exists $X\subseteq G$ with $\Delta(X)\le 2pn$ such that every $e\in K_n\setminus X$ is in at least $\frac{1}{(q+1)^q}\cdot p^{\binom{q}{2}-1}\cdot \binom{n}{q-2}$ $K_q$-cliques of $X\cup \{e\}$.
\end{lem}

For step (2), we need to find an omni-absorber $A$ of $X$ \emph{inside $G\setminus X$}. To that end, we find an omni-absorber $A_0$ of $X$ in $K_n$ via Theorem~\ref{thm:RefinedEfficientOmniAbsorber}. Then we \emph{re-embed} $A$ \emph{into $G\setminus X$} using the methodology of fake-edges and private absorbers developed in~\cite{DPI}. Thus the following theorem was proved in~\cite{DHLP25}. 

\begin{thm}[Nash-Williams Refined Efficient Omni-Absorber]\label{thm:NWRefinedEfficientOmniAbsorber}
$\forall~q\ge 3$ and $\varepsilon\in (0,1)$, $\exists~C\ge 1$:
If $X\subseteq  G\subseteq K_n$ with $\delta(G)\ge (1-\frac{1}{2q-2} + \varepsilon)n$ and $\Delta(X) \le \frac{n}{C}$, then there exists a $C$-refined $K_q$-omni-absorber $A$ for $X$ in $G$ with $\Delta(A) \le C\cdot \max\left\{\Delta(X),~\sqrt{n}\cdot \log n\right\}$.    
\end{thm}

The key to proving the above theorem is first that ``fake-edges '' have \emph{rooted degeneracy} at most $q-1$ (meaning an ordering of non-root vertices such that each has at most $q-1$ earlier neighbors in the ordering); the second key is that there exist $K_q$-absorbers of rooted degeneracy at most $2q-2$ (as shown in~\cite{DKPIII} though this also follows from the short construction of absorbers by using boosters of rooted degeneracy $2q-2$ whose construction is explained later in the section of high girth Steiner systems). Note that using such an absorber construction is the bottleneck here (the rooted degeneracy bound of $2q-2$ is tight for $K_q$-absorbers as discussed in~\cite{DKPIII}) but it may be possible to utilize the more sophisticated absorber constructions of~\cite{GKLMO19} to improve the bound above. 

For step (3), a different approach than was originally used in the refined absorption proof of the Existence Conjecture is needed. Indeed, we cannot directly apply the Boosting Lemma from Glock, K\"uhn, Lo, and Osthus~\cite{GKLO16} since that only works if the minimum degree is much closer to $n$, namely there is some $\varepsilon$ where it works for minimum degree $(1-\varepsilon)n$ (but unfortunately this $\varepsilon$ is much smaller than $1/4$). 

The cleanest approach (and by far the easiest) is to use the ``Inheritance Lemma for Minimum Degree'' -- a concept that has echoes in many earlier works in the literature but has started to gain prominence in works on Hamilton cycles, tilings, and now decompositions. We will use a version of the lemma by Lang~\cite{lang2023tiling} as follows. We note that Lang's lemma itself has a remarkably simple proof from standard concentration inequalities that bypasses any use of Szemer\'edi's Regularity Lemma.

\begin{lem}[Inheritance Lemma for Minimum Degree~\cite{lang2023tiling}]\label{lem:LangInheritance}
$\forall~\varepsilon\in(0,1)$ and $m\geq 1$, $\exists~s_0$ s.t.~for all $s\geq s_0$ and $n$ large enough: Let $\delta\geq 0$ and let $G$ be an $n$-vertex graph with $\delta(G)\geq (\delta+\varepsilon)(n-1)$. Then for every m-set $M\subseteq V(G)$, there exist at least $(1-e^{-\sqrt{s}})\binom{n-m}{s-m}$ distinct $s$-sets $S\subseteq V(G)$ s.t.~$M\subseteq S$ and $\delta(G[S])\geq (\delta+\varepsilon/2)(s-1)$.
\end{lem}

From the Inheritance Lemma, we can prove a regularity boosting lemma that works in the setting of Nash-Williams' Conjecture as follows. 

\begin{lem}[Nash-Williams Boosting Lemma -- Constant Irregularity Version]\label{lem:NWRegBoostConstant}
$\forall q\geq 3$ and $\varepsilon \in (0,1)$, $\exists~c\in(0,1)$ s.t.~for all $\gamma\in (0,1)$ and $n$ large enough: If $J\subseteq K_n$ with $\delta(J)\ge (\max\{\delta_{K_q}^*,1-\frac{1}{q+1}\}+\varepsilon)n$, then there exists a family $\mathcal{H}$ of copies of $K_q$ in $J$ such that every $e\in J$ is in $(c\pm \gamma)\cdot \binom{n-2}{q-2}$ copies of $K_q$ in $\mathcal{H}$.
\end{lem}

The idea is that any $s$-subset $S$ of our graph $J$ that satisfies $\delta(J[S]) \ge (\delta_{K_q}^*+\frac{\varepsilon}{2})m$ will admit a fractional $K_q$-decomposition by definition of the fractional decomposition threshold if $s$ is large enough. Thus if we average over these decompositions for all such $S$ and appropriately scale, we will find an ``almost'' fractional $K_q$-decomposition of $J$ (almost meaning each edge has roughly weight 1) but where the weight of any copy of $K_q$ is at most $\frac{C}{n^{q-2}}$ for some constant $C$ only depending on $q$ and $\varepsilon$ (since any fixed copy of $K_q$ is not in too many $S$). Then Lemma~\ref{lem:NWRegBoostConstant} follows by randomly sampling from this fractional weighting and applying Chernoff bounds. Note importantly here that this only gives irregularity (the $\gamma$) for any small but fixed constant $\gamma$ as opposed to a version where $\gamma$ can be made polynomially small in $n$ which we also obtained. Indeed polynomial small irregularity follows by sampling if we can remove the `almost' above. Thus we proved the following stronger theorem (by adding `a fixing' idea of Montgomery's~\cite{montgomery_fractional_2019}) but first a definition. We say a fractional $F$-decomposition of a graph $G$ on $n$ vertices is \emph{$C$-low-weight} if every copy of $F$ in $G$ has weight at most $\frac{C}{n^{v(F)-2}}$.

\begin{thm}\label{thm:LowWeightDecomposition}
    For every graph $F$ and real $\varepsilon\in (0,1)$, $\exists~C\geq 1$ s.t.~for all $n$ large enough: If $G$ is a $n$-vertex graph with $\delta(G)\geq (\delta^*_F+\varepsilon)n$, then there exists a $C$-low-weight fractional $F$-decomposition of $G$.
\end{thm}

\section{Extremal Design Theory, Part II: High Girth Steiner Systems.}

\subsection{Erd\H{o}s' Conjecture on High Girth Steiner Triple Systems.}

Another important stream of research in extremal design theory concerns finding designs that avoid certain substructures, in particular to find Steiner systems that are `locally sparse'. To that end, we say a {\em $(j,i)$-configuration} in a set $\mathcal{P}$ of sets on a ground set $X$ is a set of $i$ elements of $\mathcal{P}$ spanning at most $j$ elements of $X$. Thus a $(j,i)$-configuration in a $K_3$-packing is a set of $i$ triangles spanning at most $j$ vertices. Observe that one triangle is a $(4, 1)$-configuration and any two triangles sharing a vertex is a $(5,2)$-configuration. Indeed, every $(n,3,2)$-Steiner system contains an $(i+3,i)$-configuration for every $1 \le i \le n-3$; this observation prompted Erd\H{o}s to study $(i+2,i)$-configurations in the 1970s. The \textit{girth} of a triangle packing is the smallest integer $i\ge 2$ such that the packing contains an $(i+2, i)$-configuration. Note that a family of triangles having the property of containing no $(4,2)$-configuration is equivalent to being a triangle packing. 

In 1973, Brown, Erd\H{o}s, and S\'{o}s~\cite{BES73} proved that for every $i\geq 2$, there exist constants $C_i>c_i> 0$ such
that every triangle-packing on $n$ vertices of size at least $C_i\cdot n^2$ has an $(i+2,i)$-configuration, while for every integer~$n$ there exists a triangle-packing of size $c_i\cdot n^2$ with no $(i+2,i)$-configuration. We refer the reader to the recent work of Delcourt and Postle~\cite{DP22BES} from 2022 for further information on this problem. Stemming from this work, Erd\H{o}s~\cite{E73} in 1973 made the following conjecture.

\begin{conj}[Erd\H{o}s~\cite{E73}]\label{conj:Erdos}
For every integer $g\geq 3$, every sufficiently large $K_3$-divisible complete graph admits a $K_3$-decomposition with girth at least $g$. 
\end{conj}

In 1993, Lefmann, Phelps, and R{\"o}dl \cite{LPR93} showed that for every $g \ge 2$, there exists a constant $c_g$ depending on $g$ such that for every $n \ge 3$, there exists a partial Steiner triple system $S$ with $|S| \ge c_g \cdot n^2$ and girth at least $g$ (with $c_g \rightarrow 0$ as $g\rightarrow \infty$). More recently, in 2019 Glock, K{\"u}hn, Lo, and Osthus~\cite{GKLO20} and independently Bohman and Warnke~\cite{BW19} settled the approximate version of Erd\H{o}s' Conjecture. In 2022, Kwan, Sah, Sawhney, and Simkin \cite{KSSS2024STS} impressively proved the conjecture in full using the method of iterative absorption.

\begin{thm}[Kwan, Sah, Sawhney, and Simkin~\cite{KSSS2024STS}]\label{thm:KSSS}
Conjecture~\ref{conj:Erdos} is true.
\end{thm}

\subsection{The High Girth Existence Conjecture.} It is natural to consider generalizing the notion of girth to $(n,q,r)$-Steiner systems.  In \cite{GKLO20}, Glock, K{\"u}hn, Lo, and Osthus noted that for every fixed $i \ge 2$, every $(n, q, r)$-Steiner system contains a $((q -r)i+r+1, i)$-configuration. Motivated by this fact, one defines the \emph{girth} of an $(n, q, r)$-Steiner system as the smallest integer $g \ge 2$ for which it contains a $((q - r)g + r, g)$-configuration.  Glock, K{\"u}hn, Lo, and Osthus~\cite{GKLO20}, as well as Keevash and Long~\cite{KL20}, conjectured the existence of $K_q^r$-decompositions of $K_n^r$ of arbitrarily large girth (for $r = 2$ and $q \ge 3$, this was previously asked by F{\"u}redi and Ruszink{\'o} \cite{FR13} in 2013) as follows.

\begin{conj}[High Girth Existence]\label{conj:HighGirthExistence}
Let $q>r\ge 2$ be integers. For every integer $g\geq 3$, every sufficiently large $K_q^r$-divisible complete $r$-uniform hypergraph admits a $K_q^r$-decomposition with girth at least $g$. 
\end{conj}

In 2022, the approximate version of the High Girth Existence Conjecture was settled by Delcourt and Postle \cite{DP22} and, independently, Glock, Joos, Kim, K{\"u}hn, and Lichev \cite{GJKKL24}; that is, they proved the existence of approximate $(n, q, r)$-Steiner systems of high girth with almost full size. In fact, both papers developed a general methodology that finds almost perfect matchings in hypergraphs that avoid a set of forbidden submatchings provided certain degree and codegree conditions are met. In particular, the general results then imply the approximate version of the High Girth Existence Conjecture as a corollary. Recently, in 2024 Delcourt and Postle~\cite{DPII} settled the High Girth Existence Conjecture in full by combining their forbidden submatching method with their newly developed refined absorption methodology and some additional ideas as follows.

\begin{thm}[Delcourt and Postle \cite{DPII}]\label{thm:DPHighGirth}
Conjecture~\ref{conj:HighGirthExistence} is true.
\end{thm}

\subsection{A Refined Absorption Proof of the High Girth Existence Conjecture.}

To prove the High Girth Existence Conjecture, we follow the same plan with modifications. Steps (1) and (3) remain the same, even using Lemma~\ref{lem:RandomX} and Lemma~\ref{lem:RegBoost} as before. For step (4), we apply the ``Forbidden Submatchings with Reserves'' Theorem (see~\cite{DPII}). Thus the remaining challenge is to modify step (2) as follows:
\begin{enumerate}
    \item[(2')] Construct a \emph{high girth} omni-absorber $A$ of $X$ that \emph{does not shrink `too many' configurations}. 
\end{enumerate}

The required definition of a high girth omni-absorber is straightforward as follows. 

\begin{definition}\label{def:OmniAbsorberCollectiveGirth}
A $K_q^r$-omni-absorber $A$ for an $r$-graph $X$ with decomposition function $\mathcal{Q}_A$ has \emph{collective girth at least $g$} if for every $K_q^r$-divisible subgraph $L$ of $X$, $\mathcal{Q}_A(L)$ has girth at least $g$. 
\end{definition}

An omni-absorber having high collective girth is not enough for our purposes to prove Theorem~\ref{thm:DPHighGirth}. The issue is that the various $\mathcal{Q}_A(L)$, individually or collectively, may forbid subconfigurations in $K_n^r\setminus (X\cup A)$ that would extend to forbidden configurations with $\mathcal{Q}_A(L)$. We need to ensure there are not too many such forbidden subconfigurations. Namely, the required assumption is whatever is necessary to apply the ``Forbidden Submatchings with Reserves'' Theorem. Again, to state the precise technical definition of ``not shrink too many'' requires many other definitions first and so we omit it. Thus there are two important proof parts then to enact (2'). \emph{First, how do we build an omni-absorber of collective girth at least $g$? Second, how do we ensure it does not shrink ``too many'' configurations?}

For the first part, we start with Theorem~\ref{thm:RefinedEfficientOmniAbsorber} which crucially provides a $C$-refined omni-absorber. Then we embed a private booster for each clique in the decomposition family. If each booster itself has the right high girth properties, we can use the Lov\'asz Local Lemma to ensure the resulting omni-absorber has collective girth at least $g$. The key notion is the rooted girth of a booster as follows.

\begin{definition}[Rooted Girth]
Let $q > r\ge 1$ be integers. Let $\mathcal{B}$ be a $K_q^r$-packing of a graph $G$. For $S\subseteq V(G)$, the \emph{rooted girth} of $\mathcal{B}$ at $S$ is the smallest integer $g\ge 1$ such that there exists a subset $\mathcal{B}'\subseteq \mathcal{B}$  with $|\mathcal{B}'|=g$ and $|V(\bigcup \mathcal{B}')\setminus S| < (q-r)\cdot g$. Similarly, if $B$ is a $K_q^r$-booster and $S\in \mathcal{B}_{\rm off}$, then we define the \emph{rooted girth} of $B$ at $S$ as the minimum of the girth of $\mathcal{B}_{{\rm on}}$, the girth of $\mathcal{B}_{{\rm off}}$, and the rooted girth of $\mathcal{B}_{{\rm on}}$ at $V(S)$.
\end{definition}

The first key technical result is the existence of rooted boosters of high rooted girth defined as follows.

\begin{thm}[High Rooted Girth Booster Theorem]\label{thm:HighRootedGirthBooster}
Let $q > r\ge 2$ and $g\ge 1$ be integers. If the ``High Cogirth Existence Theorem'' holds for $r':=r-1$, then there exists a $K_q^r$-booster $B$ with rooted girth at least $g$. 
\end{thm}

To construct such a high girth $K_q^r$-booster $B$ one uses the following construction: take a $K_{q-1}^{r-1}$-booster $B'$ and two new vertices $u,v$; we let $E(B):=\{ e\cup \{u\}, e\cup \{v\}: e\in B'\} \cup \bigcup_{Q\in \mathcal{B}_{\rm on}\cup \mathcal{B}_{\rm off}} \{ T\subseteq Q: |T|=r\}$. Thus the proof of Theorem~\ref{thm:HighRootedGirthBooster} uses induction on the uniformity, namely the existence of high girth $K_{q-1}^{r-1}$-boosters; however it is not enough to simply have these of high rooted girth. Rather, we need the existence of $K_{q-1}^{r-1}$-boosters of high \emph{cogirth} (see~\cite{DPII} for the definition). The only way we are able to construct these is to prove a stronger form of Theorem~\ref{thm:DPHighGirth} which asserts that there exist two disjoint high girth Steiner systems with high cogirth. The proof of that theorem follows similarly to that of Theorem~\ref{thm:DPHighGirth} with some minor modifications. We note this is still not the desired construction since if $S=e\cup \{u\}$ is the rooted clique, then the `mirror clique' $e\cup \{v\}$ may not have high rooted girth. So we need to layer new copies of the booster constructed above on top of the mirror clique itself until the final mirror clique is vertex-disjoint from the original rooted clique.

We also note though that being a high cogirth $K_{q-1}^{1}$-booster is equivalent to being the line graph $L(H)$ of a graph $H$ where $H$ is a $(q-1)$-regular bipartite graph of high girth (which were already known to exist). Now to prove the high girth omni-absorber theorem then, we add a private rooted booster of rooted girth at least $g$ for each clique in the decomposition family of the omni-absorber. 

Recall however there is the second difficulty of ensuring the resulting omni-absorber does not shrink too many configurations. This second part of not shrinking too many configurations is in fact handled by embedding the boosters randomly (where we used a sparsification trick to first randomly sparsify the set of potential boosters to ensure we do not shrink too many configurations and only then apply a high girth embedding lemma to embed from the sparsified sets edge-disjointly). 

\subsection{The ``Erd\H{o}s meets Nash-Williams'' Conjecture.}

In light of these two fundamental conjectures in extremal design theory, namely Nash-Williams' Conjecture and Erd\H{o}s' Conjecture, as well as the progress on them in recent years, it is natural to wonder about the minimum degree threshold to guarantee the existence of a high girth $K_3$-decomposition. In 2020, Glock, K\"uhn, and Osthus~\cite[Conjecture 7.7]{GKO20Survey} proposed the following combination of these two conjectures.

\begin{conj}[``Erd\H{o}s meets Nash-Williams''~\cite{GKO20Survey}]\label{conj:E-NW}
    For every integer $g$ and any sufficiently large $n$, every $K_3$-divisible graph $G$ on $n$ vertices with $\delta(G)\geq 3n/4$ admits a $K_3$-decomposition with girth at least $g$.
\end{conj}

The best progress to date is by Delcourt, Henderson, Lesgourgues, and Postle~\cite{DHLP25} who tied the existence of high girth $K_3$-decompositions to the fractional threshold $\delta^*_{K_3}$ using the method of refined absorption as follows.

\begin{thm}[Delcourt, Henderson, Lesgourgues, and Postle~\cite{DHLP25}]\label{thm:ENW} $\forall g\geq 3$ and $\varepsilon>0$, every sufficiently large $K_3$-divisible graph~$G$ on~$n$ vertices with 
    $\delta(G)\ge (\max\{\delta^*_{K_3},\frac{3}{4}\}+\varepsilon)n$
    admits a $K_3$-decomposition with girth at least $g$. 
\end{thm}

We now turn to the common generalization of Erd\H{o}s' Conjecture and the folklore generalization of Nash-Williams' Conjecture to $K_q$-decompositions. Delcourt, Henderson, Lesgourgues, and Postle~\cite{DHLP25} proved the following.

\begin{thm}[Delcourt, Henderson, Lesgourgues, and Postle~\cite{DHLP25}]\label{thm:GenralisationMain2q} $\forall g,q\geq 3$ and $\varepsilon>0$, every sufficiently large $K_q$-divisible graph~$G$ on~$n$ vertices with $\delta(G)\ge (\max\{\delta^*_{K_q},1-\frac{1}{2q-2}\}+\varepsilon)n$
    admits a $K_q$-decomposition with girth at least $g$. 
\end{thm}

Recall that the best known upper bound for $q \geq 4$ on $\delta^*_{K_q}$ is $1-\frac{1}{100q}$ by Montgomery~\cite{montgomery_fractional_2019}. Therefore the fractional bound is the bottleneck in the above theorem. However, it is entirely possible that the second term of $1-\frac{1}{2q-2}$ -- which is the threshold to embed absorbers via rooted degeneracy -- could be improved, namely to $1-\frac{1}{q+1}$ as in the result of Glock, K\"uhn, Lo, Montgomery, and Osthus~\cite{GKLMO19}. It is then natural to conjecture the following.
\begin{conj}\label{conj:ENWLargeCliquesNoEpsilon}
    For all integers $g\geq 3$ and $q\geq 3$ and real $\varepsilon > 0$, every sufficiently large $K_q$-divisible graph~$G$ on~$n$ vertices with $\delta(G)\ge (\delta^*_{K_q}+\varepsilon)n$ admits a $K_q$-decomposition with girth at least $g$.
\end{conj}

While most of the machinery of our proof would work for the potentially better bound of Conjecture~\ref{conj:ENWLargeCliquesNoEpsilon}, there are two areas which would require new ideas. One is utilizing the better absorber constructions from Glock, K\"{u}hn, Lo, Montgomery, and Osthus~\cite{GKLMO19} to embed the absorbers in a graph with the lower minimum degree condition; the other much harder issue is proving that girth boosters can also be embedded for such a lower minimum degree condition.

\subsection{A Refined Absorption Proof for Erd\H{o}s meets Nash-Williams.} As for modifying the refined absorption proof framework to prove Theorem~\ref{thm:ENW}, for step (1) we use Lemma~\ref{lem:RandomXHighMinDeg}. For step (2), we use Theorem~\ref{thm:NWRefinedEfficientOmniAbsorber} as the base and then embed girth boosters as in the high girth existence proof. Crucially, there are girth boosters for $K_3$-decompositions of rooted degeneracy at most $4$ and so these can be embedded in $G$ (and similarly girth boosters for $K_q$-decompositions of rooted degeneracy at most $2q-2$ for general $q$). For step (3), we start with Theorem~\ref{thm:LowWeightDecomposition} but we also have to avoid using \emph{dangerous triangles} (those that complete a low girth configuration with the high girth omni-absorber); so we use additional ideas such as seeding the decomposition so that every clique has some weight, removing dangerous triangles and then also using the Boosting Lemma of Glock, K\"uhn, Lo, Osthus~\cite{GKLO16} to fix it once more. For step (4), we use the Forbidden Submatchings with Reserves Theorem.

\section{Probabilistic Design Theory: Thresholds for Steiner Systems.}

\begin{definition}
    Let $q \geq 1$ be an integer.  For every integer $n \geq q$ and real $p \in [0,1]$, let $\mathcal{G}^{(q)}(n,p)$ be the random $n$-vertex $q$-uniform hypergraph in which each $q$-set is included as an edge independently with probability $p$.
    For an increasing property $\mathcal{P}$ of $q$-uniform hypergraphs and an increasing sequence $(n_i : i \in \mathbb N)$, a function $p^*=p^*(n)$ is a \textit{threshold} if as $i \rightarrow \infty$,
\begin{equation*}
\textrm{Prob}\left[ \mathcal{G}^{(q)}(n_i,p) \in \mathcal{P} \right]\rightarrow 
 \begin{cases}
      0 & \text{ if }  p=o(p^*) \text{ and }\\
      1 & \text { if } p=\omega(p^*).
    \end{cases} 
 \end{equation*}   
  Although thresholds are not unique, every pair of thresholds is related by some multiplicative constant factor, so we will refer to \textit{the} threshold for a property as is common in the literature.
\end{definition}

For $r = 1$, it is easy to see that $(n,q,1)$-Steiner systems exist if and only if $q \mid n$.  However, the ``threshold'' version of this problem -- determining the threshold for $\mathcal{G}^{(q)}(qn,p)$ to contain a $(qn,q,1)$-Steiner system, or equivalently, for $\mathcal{G}^{(q)}(qn,p)$ to contain a \textit{perfect matching} -- was considered to be one of the most important problems in probabilistic combinatorics until its resolution by Johansson, Kahn, and Vu~\cite{JKV08} in 2008. This problem, known as ``Shamir's Problem'' (as called by Erd\H{o}s \cite{E81b}), was also a major motivation behind the Kahn--Kalai Conjecture~\cite{KK07}, recently proved by Park and Pham~\cite{PP23}.  (The ``fractional'' version of this conjecture posed by Talagrand~\cite{Ta10} was proved slightly earlier by Frankston, Kahn, Narayanan, and Park~\cite{FKNP21}.)  The $q = 2$ case, concerning perfect matchings in random graphs, is a classic result of Erd\H{o}s and R\'enyi~\cite{ER66}.  Standard probabilistic arguments show that $n^{-q+1}\log n$ is the threshold for the property that every vertex of $\mathcal{G}^{(q)}(qn,p)$ is contained in at least one edge, which in turn implies that the $(n,q,1)$-Steiner system threshold is at least $n^{-q + 1}\log n$.  Johansson, Kahn, and Vu~\cite{JKV08} proved a matching upper bound, and this fact also follows from the Park--Pham Theorem~\cite{PP23}. 

For $q > r > 1$, determining the threshold for $\mathcal{G}^{(q)}(n, p)$ to contain an $(n,q,r)$-Steiner system (for $n$ satisfying the divisibility conditions) is more challenging, as any non-trivial upper bound on the threshold necessarily implies not only that these designs exist, but that they exist ``robustly''. Hence, any result of this type seemed out of reach until Keevash's~\cite{K14} breakthrough in 2014. In 2017, Simkin~\cite{S17} conjectured that $n^{-1}\log n$ is the threshold for $\mathcal{G}^{(q)}(n,p)$ to contain an $(n,q,q-1)$-Steiner system for every integer $q > 1$, and Keevash posed the same problem for $q = 3$ (the case of Steiner triple systems) in his 2018 ICM talk. 
Similar to the $r = 1$ case, standard arguments show that $n^{-1}\log n$ is a lower bound on this threshold.
Utilizing the work of Keevash~\cite{K14} on the Existence Conjecture, Simkin~\cite{S17} showed that the threshold for $(n,q,q-1)$-Steiner systems for any fixed integer $q > 1$ is at most $n^{-\varepsilon}$ for some $\varepsilon > 0$ that depends on $q$.
Following the breakthroughs of Frankston, Kahn, Narayanan, and Park~\cite{FKNP21} as well as Park and Pham~\cite{PP23} on the Kahn--Kalai Conjecture~\cite{KK07}, a series of results in 2022 culminated in the resolution of the threshold problem for Steiner triple systems.
First, Sah, Sawhney, and Simkin~\cite{SSS23} proved an upper bound on the threshold of $n^{-1+o(1)}$; Kang, Kelly, K\"uhn, Methuku, and Osthus~\cite{KKKMO22} proved the better bound of $n^{-1}\log^2 n$. Subsequently Jain and Pham~\cite{JP22} and independently Keevash~\cite{K22} settled the problem, proving that $n^{-1}\log n$ upper bounds the threshold.

Besides the cases of $r = 1$ and of $r = 2$ and $q = 3$, the threshold problem for $(n, q, r)$-Steiner systems is still open.  
Kang, Kelly, K\"uhn, Methuku, and Osthus~\cite{KKKMO22} conjectured that the threshold is $n^{-q+r}\log n$.  
For $p = o(n^{-q+r}\log n)$, asymptotically almost surely $\mathcal{G}^{(q)}(n,p)$ will contain an $r$-set of vertices that is not contained in any edge and thus does not contain an $(n,q,r)$-Steiner system.  Hence, to prove this conjecture it suffices to show that $n^{-q+r}\log n$ upper bounds the threshold for $\mathcal{G}^{(q)}(n,p)$ to contain an $(n,q,r)$-Steiner system, as follows.

\begin{conj}[Kang, Kelly, K\"uhn, Methuku, and Osthus \cite{KKKMO22}]\label{conj:KKKMO} $\forall~q > r$: If $\binom{q - i}{r - i} \mid \binom{n - i}{r - i}$ for all $i \in \{0, \dots, r-1\}$ and $p = \omega(n^{-q+r}\log n)$, then
a.a.s. $\mathcal{G}^{(q)}(n, p)$ contains an $(n,q,r)$-Steiner system.
\end{conj}

The main result of~\cite{DKPIV} is progress on the $r=2$ case of Conjecture~\ref{conj:KKKMO} as follows.

\begin{thm}[Delcourt, Kelly, and Postle~\cite{DKPIV}]\label{thm:ExistenceSpread} $\forall q > 2$ the following holds:  
If $q - 1 \mid n - 1$ and $\binom{q}{2}\mid \binom{n}{2}$ and $p\geq n^{-(q-6)/2}$, then a.a.s. $\mathcal{G}^{(q)}(n,p)$ contains an $(n, q, 2)$-Steiner system.
\end{thm}

Besides the $q = 3$ case, no non-trivial upper bound on the threshold for $(n,q,2)$-Steiner systems was known prior to the above theorem.  
The exponent of $n$ in the lower bound for $p$ in Theorem~\ref{thm:ExistenceSpread} is essentially a factor of two away from the conjectured value.
For $p = 1$, Theorem~\ref{thm:ExistenceSpread} is equivalent to Wilson's~\cite{WI, WII, WIII} result that the Existence Conjecture holds for $r = 2$. To prove Theorem~\ref{thm:ExistenceSpread}, one employs the Park--Pham Theorem~\cite{PP23}.  In fact, the fractional version of Frankston, Kahn, Narayanan, and Park~\cite{FKNP21} suffices.  To use this theorem, our main objective is to show the existence of a sufficiently ``spread'' probability distribution on the $K_q$-decompositions of $K_n$ (provided $K_n$ is $K_q$-divisible). In this context, the definition of \textit{spread} is as follows. 

\begin{definition}
    A probability distribution on $K_q$-decompositions $\mathcal{H}$ of a graph $G$ is \textit{$\sigma$-spread} if $\text{Prob}[\mathcal{S} \subseteq \mathcal{H}] \leq \sigma^{|\mathcal{S}|}$  for all $K_q$-packings $\mathcal{S}$ of $G$.
\end{definition}

The Park--Pham Theorem implies that for some absolute constant $K > 0$, if there exists a $\sigma_n$-spread probability distribution on $K_q$-decompositions of $K_n$ and $p \geq K\sigma_n\log n$, then $\mathcal{G}^{(q)}(n, p)$ asymptotically almost surely contains an $(n, q, 2)$-Steiner system.  Hence, Theorem~\ref{thm:ExistenceSpread} follows immediately from the following result.

\begin{thm}[Delcourt, Kelly, and Postle~\cite{DKPIV}]\label{thm:spread-decomposition}
$\forall~q > 2$, $\exists~\beta > 0$ s.t.~for all $n$ large enough: If $K_n$ is $K_q$-divisible, then there exists an $\left(n^{-(q - 6)/2 - \beta}\right)$-spread probability distribution on $K_q$-decompositions of $K_n$.
\end{thm}

We also note that Delcourt, Kelly, and Postle~\cite{DKPIII} use a similar (but much more technical) refined absorption approach to make progress on conjectures of Yuster on the threshold for when the random graph $G(n,p)$ has a $K_3$-packing (or more generally $K_q$-packing) with optimal leave and similarly for when the random regular graph $G_{n,d}$ admits a $K_q$-decomposition provided it is $K_q$-divisible.  

\subsection{A Refined Absorption Proof of a fairly Spread Distribution on Steiner Systems.}

We now discuss how to adapt the refined absorption proof framework to the spread setting. The main difficulty for the proof of Theorem~\ref{thm:spread-decomposition} lies in step (2), namely we need to embed an omni-absorber in a spread way for Theorem~\ref{thm:spread-decomposition}. Similarly, we require a spread version of nibble with reserves for step (4). However, since we are not striving for the optimal value of $p$, this may be accomplished via a simple random sparsification argument which will only require a $p$ that is ${\rm polylog}~n$ times the conjectured value of $p$ (and hence is not the bottleneck in the proof).

For step (2), we once more use Theorem~\ref{thm:RefinedEfficientOmniAbsorber} as a black box and then embed certain boosters, specifically ones with good randomness properties we call \textit{spread boosters}, to boost the spreadness. (Similar to how we boosted the girth via \emph{girth boosters} for the high girth problem). Indeed, for the construction of these boosters is suffices to take $L(K_{q-1},K_{q-1})$ joined completely to two new vertices -- except in that simple construction the mirror clique will not have the desired randomness properties; so we once more layer these boosters until the mirror clique is vertex-disjoint from the rooted clique. Returning to the proof then with spread boosters whose decompositions are sufficiently low density, we argue that the boosters are spread if chosen independently. However, we must use the Lov\'asz Local Lemma to ensure that the boosters are embedded disjointly. So finally we argue via the Lov\'asz Local Lemma distribution (see~\cite{JP22} for a description) that the spreadness of a fixed set of cliques is not too much larger for the disjoint case than if the choices were truly independent. 

\section{Future Directions and Open Questions.}

As for future directions, there are four main directions. 

\begin{problem}
Determine the minimum degree thresholds for $K_q$-decompositions and $K_q^r$-decompositions more generally.   
\end{problem}

\begin{problem}
Determine the probability thresholds for $K_q$-decompositions and $K_q^r$-decompositions more generally.   
\end{problem}

\begin{problem}
Extend refined absorption to partite and partitioned hypergraphs (and more generally to all of Keevash's Designs II~\cite{K18II} settings).
\end{problem}

\begin{problem}
Unify the various problems (minimum degree, high girth, spread, partite, etc.) into one super theorem (possibly with tradeoffs). 
\end{problem}

Indeed, I believe so much in this goal and in light of the evidence from Erd\H{o}s meets Nash-Williams and the method of refined absorption, I am willing to make this a postulate of extremal and probabilistic design theory going forwards. Indeed, perhaps all the mountains of design theory are really just sides of the same mountain.

\begin{postulate}[Postle]
There exists one unified theorem of designs.    
\end{postulate}

\section*{Acknowledgments.} The author would like to thank Michelle Delcourt for many helpful discussions. The author would also like to thank Cicely Henderson, Richard Lang, and Thomas Lesgourgues for helpful comments. 

\bibliographystyle{siamplain}
\bibliography{bibliography}
\end{document}